\documentclass[hidelinks,onefignum,onetabnum]{siamart220329}

\usepackage{lipsum}
\usepackage{amsfonts}
\usepackage{graphicx}
\usepackage{epstopdf}
\usepackage{algorithmic}
\ifpdf
  \DeclareGraphicsExtensions{.eps,.pdf,.png,.jpg}
\else
  \DeclareGraphicsExtensions{.eps}
\fi

\newsiamremark{remark}{Remark}
\newsiamremark{hypothesis}{Hypothesis}
\crefname{hypothesis}{Hypothesis}{Hypotheses}
\newsiamthm{claim}{Claim}
\newsiamthm{conj}{Conjecture}
\newsiamremark{Ex}{Example}

\headers{Fully-discrete entropy stable DG schemes for MHD}{}

\title{Limiter-based fully-discrete entropy stable explicit DG schemes for ideal MHD equations}

\author{Yuchang Liu\thanks{School of Mathematical Sciences,
         University of Science and Technology of China,
         Hefei, Anhui 230026, P.R. China.
  (\email{lissandra@mail.ustc.edu.cn}).}
\and Yan Jiang\thanks{ School of Mathematical Sciences,
         University of Science and Technology of China, Hefei, 
         Anhui 230026, P.R. China.  
         (\email{jiangy@ustc.edu.cn}).  
         Research supported by NSFC grant 12671485, 12271499.}
\and Zheng Sun\thanks{Department of Mathematics, The University of Alabama,
		Tuscaloosa, AL 35487, USA. 
(\email{zsun30@ua.edu}).} 
}

\usepackage{amsopn}

\ifpdf
\hypersetup{
	pdftitle={Limiter-based fully-discrete entropy stable explicit DG schemes for ideal MHD equations},
    pdfauthor={}
}
\fi

\allowdisplaybreaks

\newtheorem{assumption}{Assumption}[section]
\usepackage{subfigure}
\usepackage{mathrsfs}
\usepackage{enumitem}

\begin{document}
	
	\maketitle

	\begin{abstract}
		We propose a class of high-order fully-discrete entropy stable (ES) explicit discontinuous Galerkin (DG) solvers for the compressible ideal magnetohydrodynamics (MHD) equations. Our main theoretical contribution is the introduction of a novel generalized-path-decomposition framework for MHD equations in Godunov's symmetric form. By innovatively interpreting the interior volume integral of the non-conservative source term as a path integral along a generalized path constructed by the solution polynomial, we establish the weak cell entropy inequality for the fully-discrete DG schemes. This overarching framework also accommodates other existing DG solvers based on the symmetric form. Combined with a carefully designed ES limiter, the proposed scheme satisfies the genuine fully-discrete cell entropy inequality. With this property, a Lax--Wendroff-type theorem can be obtained to show that the solution limit satisfies the entropy condition. Finally, the scheme is naturally compatible with the locally divergence-free space. Extensive numerical experiments demonstrate the scheme's low numerical dissipation and strong robustness.
	\end{abstract}
	
	\begin{keywords}
		fully-discrete entropy stability, discontinuous Galerkin methods, ideal magnetohydrodynamics, path-conservative schemes, locally divergence-free methods
	\end{keywords}

	\begin{MSCcodes}
		65M60, 65M12, 76W05.
	\end{MSCcodes}
	
	\section{Introduction}
	\label{sec1}

Ideal magnetohydrodynamics (MHD) is a core theoretical model for understanding the dynamic behavior of plasmas under magnetic fields. Its applications range from space weather prediction and astrophysical jet simulation to frontier scientific fields such as controlled thermonuclear fusion. Mathematically, the MHD equations form a complex system of nonlinear hyperbolic conservation laws. Numerically, the unavoidable emergence of singular structures like shock waves makes the design of high‑fidelity and robust numerical algorithms a persistent difficulty.

The discontinuous Galerkin (DG) method \cite{shu2009discontinuous} has emerged as a prominent high-order numerical technique over the past few decades, characterized by its exceptional geometric flexibility, compact computational stencil, and high scalability for parallel computing. Consequently, the application of DG methods to solve MHD equations has gained significant traction in the field. However, a persistent challenge remains in preserving the intrinsic physical structures of the MHD system.

A primary issue in this context is preserving discrete entropy stability, which ensures consistency with the second law of thermodynamics. However, constructing an entropy stable (ES) DG scheme for the MHD equations poses several non-trivial difficulties. For general hyperbolic conservation laws, high-order methods typically do not satisfy the ES property by default. To address this, two primary strategies have emerged within the DG framework: the first involves utilizing summation-by-parts (SBP) operators under the discontinuous Galerkin spectral element method (DGSEM) framework \cite{chen2017entropy, liu2024entropy, liu2025structure}, while the second relies on adding artificial dissipation terms to balance entropy production \cite{abgrall2018general, gaburro2023high,chan2025artificial}. While these approaches are well-established for standard conservation laws, the MHD system presents a unique theoretical hurdle. As Godunov \cite{godunov1972symmetric} demonstrated, the MHD system is not symmetrizable in its pure conservative form; instead, a non-conservative source term (also known as the Godunov--Powell source term) must be introduced to restore symmetrizability. This requirement makes the entropy analysis for MHD significantly more complex than that of general systems. To tackle this challenge, Chandrashekar \textit{et al.} \cite{chandrashekar2016entropy} proposed a finite volume (FV) ES framework based on the symmetric form of the MHD equations, which was later extended to DGSEM by Liu \textit{et al.} \cite{liu2018entropy}. 

However, the aforementioned ES techniques are primarily developed at the semi-discrete level. 
Fully-discrete entropy stability is often achieved through implicit time discretizations
\cite{chen2020review}, whereas the construction of high-order explicit schemes with rigorous
fully-discrete entropy guarantees remains much less developed. Recently, in
\cite{liu2026limiter}, building on the notions of numerical entropy flux
\cite{kivva2022entropy,kivva2024entropy} and the weak entropy inequality for cell averages
\cite{carlier2023invariant}, we developed a limiter-based explicit DG framework for enforcing
fully-discrete entropy stability; see also \cite{wu2026epounifiedframeworkentropy}. Compared
with the global relaxation Runge--Kutta approach in \cite{ranocha2020relaxation}, our method
directly enforces a local cell entropy inequality through an explicit limiter and can
simultaneously accommodate multiple entropy inequalities. The resulting procedure is simple
and non-intrusive: it leaves the underlying spatial discretization unchanged and requires only
the application, after each time-advancement step, of a Zhang--Shu-type scaling limiter based
on a computable entropy upper bound.

In this work, building upon the framework established in \cite{liu2026limiter}, we aim to develop a fully-discrete ES explicit DG scheme for MHD equations. The primary difficulty lies in the influence of Godunov's source term. While the strategy in \cite{liu2026limiter} inspired by \cite{zhang2010positivity} allows for decomposing the update into three-point schemes to exploit the convexity of the entropy function, this approach encounters significant hurdles when applied to standard discretizations of the source term \cite{liu2018entropy, wu2018provably, liu2025structureMHD}. Specifically, several residual terms appear that prevent a rigorous proof of the weak entropy inequality for the MHD system. To address this, we innovatively employ the generalized-path-decomposition framework to analyze the symmetric form of the MHD equations, treating it as a genuinely non-conservative system \cite{pares2006numerical}. \textit{A key insight of our work is that the intra-cell integral of the source term can be naturally interpreted as a generalized path integral along a polynomial path.}
The proposed DG formulation remains path-conservative with respect to the prescribed path defining the non-conservative product (Remark \ref{rmk:path}), while the generalized paths are introduced only as an analytical tool for the entropy analysis. Within this generalized-path-decomposition framework, we rigorously establish the weak cell entropy inequality (Theorem \ref{thm:ESlike1D}). By incorporating the limiter introduced in \cite{liu2026limiter}, we further obtain a fully-discrete ES DG scheme satisfying the genuine cell entropy inequality (Theorem \ref{thm:genuine-ES}). With this property, a Lax--Wendroff-type theorem can be obtained to show that the limit of the numerical solutions satisfies the entropy condition (Theorem \ref{thm:LW}). Moreover, the entropy analysis can be adapted to other classical DG discretizations of Godunov's symmetric form with only minor modifications (Remark \ref{rmk:other}). Finally, the proposed scheme is naturally compatible with locally divergence-free (LDF) magnetic fields (Section \ref{sec:LDF}).  Without this property,
the scheme may produce nonphysical solutions or even lead to numerical instability
and eventual breakdown. A comprehensive suite of numerical tests demonstrates that the proposed scheme achieves optimal convergence rates, low numerical dissipation, and strong robustness while respecting the underlying physical constraints.

The main contributions of this paper can be summarized as follows:
\begin{itemize}[leftmargin=*]
        \item We construct a generalized-path-decomposition framework for ideal MHD equations. The proposed method formally preserves the underlying path defining the non-conservative product in Godunov's form.  
        \item The proposed method preserves the fully-discrete cell entropy inequality. In particular, along the way we prove that 
        \begin{itemize}[leftmargin=*]
            \item A class of first-order schemes satisfies the fully-discrete cell entropy inequality. 
            \item The forward-Euler DG method satisfies the weak cell entropy inequality (also referred to as ``entropy-stable-like" property in \cite{liu2026limiter}). 
        \end{itemize}
        \item A Lax--Wendroff-type theorem can be obtained to show that the limit of numerical solutions satisfies the entropy inequality.
        \item The proposed method naturally accommodates the LDF framework. 
        \end{itemize}

The remainder of this paper is organized as follows. In Section \ref{sec2}, we review the fundamental mathematical structure of the ideal MHD equations. Section \ref{sec3} details the derivation of the first-order three-point ES building block. Section \ref{sec4} introduces the high-order ES DG scheme in one dimension. Section \ref{sec5} extends the method to multi-dimensional cases and discusses the integration of the LDF space. Section \ref{sec6} provides comprehensive numerical examples and comparative validations. Finally, Section \ref{sec7} presents our conclusions and outlines directions for future research.

	\section{Ideal MHD equations}\label{sec2}

    \subsection{Governing equation}
   Consider the general $d$-dimensional compressible ideal MHD equations. They can be written as
\begin{equation}
\label{eq:MHD}
\frac{\partial \mathbf U}{\partial t} + \nabla\cdot\mathbf F(\mathbf U) = \mathbf{0},\quad \mathbf U(\mathbf x,0)=\mathbf U_0(\mathbf x),
\end{equation}
where 
$$
\mathbf U = \left[ \begin{array}{c}
	\rho\\
	\rho\mathbf u\\
	\mathcal E\\
	\mathbf B\\
\end{array} \right],\quad \mathbf{F}\left( \mathbf{U} \right) =\left[ \begin{array}{c}
	\rho\mathbf u\\
	\rho \mathbf u\otimes\mathbf u+p^\star\mathbf  I_d-\mathbf B\otimes\mathbf B\\
	\mathbf u(\mathcal E+p^\star)-\mathbf B(\mathbf u\cdot\mathbf B)\\
	\mathbf u\otimes\mathbf B-\mathbf B\otimes\mathbf u\\
\end{array} \right],
$$
with 
\begin{equation*}\mathcal E=\frac{p}{\gamma - 1} + \frac{1}{2}\rho\left\|\mathbf u\right\|^2+\frac{1}{2}\left\|\mathbf B\right\|^2.\end{equation*}
Here, $\rho$ is the mass density, $\rho\mathbf{u}$ is the momentum density, $\mathcal{E}$ is the total energy density, $p$ is the hydrodynamic pressure, $\mathbf{B}$ is the magnetic field, and $\gamma = 5/3$ is the adiabatic index. Additionally, $p^\star = p + \left\|\mathbf{B}\right\|^2/2$ is the total pressure, $\|\cdot\|$ denotes the Euclidean vector norm, $\mathbf I_d$ is the $d\times d$ identity matrix, and $\otimes$ denotes the tensor product. We also denote the flux function component-wise by $\mathbf F(\mathbf U)=[\mathbf F_1(\mathbf U),\dots,\mathbf F_d(\mathbf U)]$.

    \subsection{Divergence-free property}
    Taking divergence of the magnetic field equation yields
$$
\frac{\partial \left( \nabla \cdot \mathbf{B} \right)}{\partial t}=0,
$$
 indicating
 $\nabla\cdot\mathbf B(\mathbf x,t) = \nabla\cdot\mathbf B(\mathbf x,0).$
Consequently, if the divergence of the magnetic field is initially zero, it will remain zero for all time, i.e. 
\begin{equation}\label{eq:divfree0}
\nabla\cdot\mathbf B = 0.
\end{equation}
This is called the \textit{divergence-free} property.  Physically, this constraint reflects the absence of magnetic monopoles.
  
 In particular, the numerical scheme must be carefully designed to preserve this divergence-free property in the discrete sense, which is critical for preventing nonphysical artifacts and numerical instabilities in MHD simulations.

    \subsection{Entropy structure and symmetrizable form}
	For a general conservation law of the form \eqref{eq:MHD}, the entropy pair is defined as follows.
	
	\begin{definition}[Entropy pair]\label{def:entropy_pair}
		A convex function $\mathcal{U}(\mathbf{U})$ is  an entropy function for system \eqref{eq:MHD} if there exist entropy fluxes $\boldsymbol{\mathcal F}(\mathbf U)=[\mathcal F_1(\mathbf U),\cdots,\mathcal F_d(\mathbf U)]$ such that 
		\begin{equation*}
			\mathcal F_i'(\mathbf U)=\mathcal U'(\mathbf U)\mathbf F_i'(\mathbf U),\quad i=1,\cdots,d.
		\end{equation*}
		We call $(\mathcal U,\boldsymbol{\mathcal F})$ an entropy pair.
	\end{definition}

	Let $\mathbf V=\mathcal U'(\mathbf U)^T$ denote the \textit{entropy variable}. If $\mathcal U$ is strictly convex, then the mapping $\mathbf U\to \mathbf V$ is one-to-one. We can rewrite \eqref{eq:MHD} in terms of entropy variables:
	\begin{equation}\label{eq:MHD-V}
		\mathbf U'(\mathbf V)\frac{\partial \mathbf V}{\partial t}+\sum\limits_{i=1}^d\mathbf F_i'(\mathbf U)\mathbf U'(\mathbf V)\frac{\partial\mathbf V}{\partial x_i} = \mathbf 0.
	\end{equation}
	Due to the strict convexity of $\mathcal U$, $\mathbf U'(\mathbf V)=(\mathcal U''(\mathbf U))^{-1}$ is positive definite. If the matrices $\mathbf F_i'(\mathbf U)\mathbf U'(\mathbf V)$ are symmetric, then \eqref{eq:MHD-V} is called a \textit{symmetrization} of \eqref{eq:MHD}, and \eqref{eq:MHD} is said to be symmetrizable. An important result in \cite{godlewski2013numerical} shows that the symmetrizability of \eqref{eq:MHD} is equivalent to the existence of an entropy function.

	If a system of conservation laws admits an entropy pair, a weak solution is called the \textit{entropy solution} if it satisfies the \textit{entropy condition}
\begin{equation}\label{eq:entropy_condition}
\frac{\partial \mathcal U\left( \mathbf{U} \right)}{\partial t}+\nabla\cdot\boldsymbol{\mathcal F}(\mathbf U)\le 0
\end{equation}
in the weak sense. Formally, this means that 
\begin{equation}\label{eq:ESweak}
\int_{\mathbb R^+}\int_{\mathbb R^d}(\mathcal U(\mathbf U)\phi_t+\boldsymbol{\mathcal F}(\mathbf U)\cdot\nabla\phi )\mathrm d\mathbf x\mathrm dt\ge -\int_{\mathbb R^d}\mathcal U(\mathbf U_0(\mathbf x))\phi(\mathbf x,0)\mathrm d\mathbf x
\end{equation}
for any test function $\phi\in C_0^\infty(\mathbb R^d\times\mathbb R^+)$ with $\phi\ge 0$. In particular, the inequality \eqref{eq:entropy_condition} becomes an equality for smooth solutions.

Integrating \eqref{eq:entropy_condition} in space yields
    \begin{equation}\label{eq:es_semi}
        \frac{\mathrm d}{\mathrm dt}\int_{\mathbb R^d}\mathcal U(\mathbf U)\mathrm d\mathbf x\le 0.
    \end{equation}
Further integrating \eqref{eq:entropy_condition} in time
gives 	\begin{equation}\label{eq:ESglobal_fully}
		\int_{\mathbb R^d}\mathcal U(\mathbf U(\mathbf x,t+\Delta t))\mathrm d\mathbf x\le \int_{\mathbb R^d}\mathcal U(\mathbf U(\mathbf x,t))\mathrm d\mathbf x\, \quad \forall t,\Delta t>0.
	\end{equation}
A numerical method preserving  \eqref{eq:es_semi} or \eqref{eq:ESglobal_fully} is called \emph{entropy stable}. 
    
    Let $s = \ln (p \rho^{-\gamma})$. For ideal MHD equations, we can show that the quantities \begin{equation}\label{eq:entropy_pair} 
    \mathcal U=-\frac{\rho s}{\gamma - 1}, \qquad \boldsymbol{\mathcal F}=-\frac{\rho s\mathbf u }{\gamma - 1} 
    \end{equation}
    satisfy
    \begin{equation}\label{eq:d_rhos}
    \frac{\partial \rho s}{\partial t}+\nabla\cdot(\rho s\mathbf u)+\left( \gamma -1 \right) \frac{\rho \left( \mathbf{u}\cdot \mathbf{B} \right)}{p}\left( \nabla \cdot \mathbf{B} \right) =0
    \end{equation}
    for smooth solutions. By the divergence-free property $\nabla\cdot\mathbf B=0$, the last term in \eqref{eq:d_rhos} vanishes, thus it is natural to expect that the above $(\mathcal U,\boldsymbol{\mathcal F})$ is an entropy pair of \eqref{eq:MHD}. However, it does not satisfy Definition \ref{def:entropy_pair}. Instead, the following relation holds:
    \begin{equation}\label{eq:phi}
    \mathcal F_i'(\mathbf U)=\mathcal U'(\mathbf U)\mathbf F_i'(\mathbf U) + \phi(\mathbf V)B_{x_i}'. 
    \end{equation}
    Here, $\phi(\mathbf V)=2\beta(\mathbf u\cdot\mathbf B)$ and $\beta=\rho/2p$. Note that the subscript $x_i$ in $B_{x_i}$ denotes the spatial direction, not a partial derivative. The gradient $B'_{x_i} = \partial_{\mathbf{U}} B_{x_i}$ is a constant row vector matching the length of $\mathbf{U}$; its elements are 1 at the position corresponding to $B_{x_i}$ and 0 elsewhere. For example, if $\mathbf{U} = [\rho, \rho u_x, \mathcal{E}, B_x]^T$, then $B'_x = [0, 0, 0, 1]$.
    
    To address this, Godunov suggested the following modified ``symmetric" form
    \begin{equation}\label{eq:MHDsym}
    \frac{\partial \mathbf U}{\partial t}+\nabla\cdot\mathbf F(\mathbf U)=-\mathbf S(\mathbf U)(\nabla\cdot\mathbf B),\quad \mathbf S(\mathbf U)=\phi'(\mathbf V)^T.
    \end{equation}
    In particular, $\phi(\mathbf V)$ is homogeneous of degree one, i.e.
    \begin{equation*}
    \mathbf S(\mathbf U)\cdot\mathbf V=\phi'(\mathbf V)\mathbf V=\phi(\mathbf V).
    \end{equation*} 
    For this modified form, $\mathbf U\to \mathbf V$ is a symmetrization, hence $(\mathcal U,\boldsymbol{\mathcal F})$ in \eqref{eq:entropy_pair} is an entropy pair of \eqref{eq:MHDsym}.

    Following \cite{liu2026limiter}, we will develop a high-order fully-discrete DG method that solves the symmetric form \eqref{eq:MHDsym} and satisfies \eqref{eq:ESglobal_fully}, while locally enforcing the divergence-free condition \eqref{eq:divfree0}.

    \subsection{Non-conservative product and paths} 

    Although \eqref{eq:MHDsym} is equivalent to \eqref{eq:MHD} for smooth solutions, the non-conservative product $-\mathbf S(\mathbf U)(\nabla\cdot\mathbf B)$ must be carefully defined at discontinuities, since integration by parts cannot shift all spatial derivatives onto the test functions in the standard weak form. To resolve this issue, consider a general non-conservative system
\begin{equation}\label{eq:nc}
\mathbf U_t + \mathbf A(\mathbf U)\mathbf U_x=\mathbf 0.
\end{equation}
At a discontinuity $x_\star$, the theory in \cite{dal1995definition} suggests considering a smooth regularization of $\mathbf U$ by $\mathbf{U}^\varepsilon$. It connects the left state $\mathbf U^-$ and the right state $\mathbf U^+$ by $\mathbf{U}^\varepsilon = \Psi((x-x_\star+\varepsilon)/(2\varepsilon);\mathbf{U}^-,\mathbf{U}^+)$ if $|x-x_\star|<\varepsilon$. Here $\Psi$ is a \emph{path} defined as follows.

\begin{definition}\label{def:pathfun}
A path is a Lipschitz continuous function $\Psi:[0,1]\times \mathbb R^p\times\mathbb R^p\to \mathbb R^p$ that satisfies 
\begin{equation}\label{eq:pathdef}
\Psi(0;\mathbf U^-,\mathbf U^+)=\mathbf U^-,\quad \Psi(1;\mathbf U^-,\mathbf U^+)=\mathbf U^+,\quad  \Psi(\xi;\mathbf U,\mathbf U)=\mathbf U.
\end{equation}
When no confusion arises, we denote it simply by $\Psi(\xi)$.
\end{definition}

Taking $\varepsilon\to 0$, $\mathbf A (\mathbf U) \mathbf U_x$ is defined as a Borel measure such that
    $$
    [\mathbf A (\mathbf U) \mathbf U_x]_\Psi=\mathbf A (\mathbf U) \mathbf U_x \mathrm{d} x + \sum_{x_\star} \left(\int_0^1 \mathbf A(\Psi(\xi))\frac{\partial \Psi}{\partial \xi}(\xi)\mathrm d\xi\right)\delta(x_\star),
    $$
    where the summation is taken over all discontinuous points $x_\star$, the left and right states in $\Psi$ vary with $x_\star$, and $\delta$ is the Dirac measure. 
    With the non-conservative product well-defined, the weak solution to \eqref{eq:nc} can be defined accordingly. 

In this framework, the definition of weak solutions depends heavily on the choice of paths, whose selection is \emph{a priori} arbitrary. To obtain the physically relevant solution, the path must be chosen based on physical considerations---most rigorously, by studying the vanishing viscosity limit \cite{castro2013entropy}: different underlying dissipation mechanisms dictate different shock structures that prescribe the admissible paths.

In the context of the ideal MHD equations, the effect of paths is usually underemphasized. In this work, we will derive the numerical scheme for any given path, but only consider the linear path in the numerical tests:
\begin{equation}\label{eq:linpath}
    \Psi(\xi)=\mathbf U^-+\xi (\mathbf U^+-\mathbf U^-).
\end{equation}

	\section{First-order building block in one dimension}
	\label{sec3}

    For simplicity, we first consider the 1D form of \eqref{eq:MHDsym}
    \begin{equation}\label{eq:MHD1D}
    \frac{\partial \mathbf U}{\partial t}+\frac{\partial \mathbf F(\mathbf U)}{\partial x}=-\mathbf S(\mathbf U)\frac{\partial B_x}{\partial x}.
    \end{equation}
    We assume $\mathbf U \in \mathbb R^p$, and $p$ is not necessarily $4$, as \eqref{eq:MHD1D} may represent the general multidimensional system \eqref{eq:MHDsym} restricted to a flow field varying only in the $x$-direction.  Moreover, although it is implied by the divergence-free condition in the 1D case, we do not assume $B_x$ to be constant to preserve the general structure of the scheme.
    
    In this section, we study first-order, three-point ES schemes for \eqref{eq:MHD1D}, which serve as the building blocks for constructing high-order ES schemes \cite{liu2026limiter}. Because the entropy pair of the symmetric form of the MHD equations differs from that of standard conservation laws---with special consideration required for approximating the non-conservative source---the analysis of the first-order scheme is fundamentally different and significantly more challenging.

    \subsection{ Path-conservative scheme} 
    
    Assume the spatial domain $\mathbb R$ is divided into uniform cells $I_i=[x_{i-1/2},x_{i+1/2}]$, and the cell average on $I_i$ is denoted by $\bar{\mathbf U}_i$. We treat \eqref{eq:MHD1D} as a non-conservative system. Consider the following three-point scheme:
    \begin{equation}\label{eq:first}
    \begin{aligned}
    \bar{\mathbf U}_i^{n+1}=&\,\bar{\mathbf U}_i^n-\lambda\left( \hat{\mathbf F}(\bar{\mathbf U}_i^n,\bar{\mathbf U}_{i+1}^n)-\hat{\mathbf F}(\bar{\mathbf U}_{i-1}^n,\bar{\mathbf U}_{i}^n) \right)
    \\ &-\lambda\left( \mathcal D^-(\bar{\mathbf U}_i^n,\bar{\mathbf U}_{i+1}^n)+\mathcal D^+(\bar{\mathbf U}_{i-1}^n,\bar{\mathbf U}_i^n) \right).
    \end{aligned}
    \end{equation}
    Here, $\hat{\mathbf F}$ represents the numerical flux. In this work, we use the HLL flux
    \begin{equation}\label{eq:HLL}
    \hat{\mathbf F}^{\mathrm{HLL}}(\mathbf U^-,\mathbf U^+)=\frac{\mathcal S_R\mathbf F(\mathbf U^-)-\mathcal S_L\mathbf F(\mathbf U^+)+\mathcal S_R\mathcal S_L(\mathbf U^+-\mathbf U^-)}{\mathcal S_R-\mathcal S_L},
    \end{equation}
    where $\mathcal S_R\ge 0,\ \mathcal S_L\le 0$ are estimates of maximum and minimum signed wave speed. The standard setup is
    \begin{equation}\label{eq:SRSLstd}
    \begin{aligned}\mathcal S_R^{\mathrm{std}}&=\max\{\sigma_{\max}(\mathbf F'(\mathbf U^-)),\sigma_{\max}(\mathbf F'(\mathbf U^+)),0\},
    \\ \quad \mathcal S_L^{\mathrm{std}}&=\min\{\sigma_{\min}(\mathbf F'(\mathbf U^-)),\sigma_{\min}(\mathbf F'(\mathbf U^+)),0\},
    \end{aligned}
    \end{equation}
    and $\sigma_{\min}(\mathbf A),\sigma_{\max}(\mathbf A)$ are the minimum and maximum eigenvalues of $\mathbf A$, respectively. 
    However, the standard setup \eqref{eq:SRSLstd} may not ensure entropy stability, and the wave speed in \eqref{eq:HLL} will be determined later. 
    
    The key to the scheme \eqref{eq:first} is the introduction of the $\mathcal D^\pm$ terms to approximate Godunov's source term $\mathbf S(\mathbf U)(\nabla\cdot\mathbf B)$. Following the path-conservative framework \cite{pares2006numerical}, we impose specific requirements on these operators, as detailed below.
    
    \begin{definition}[Path-conservative scheme] \label{def:path}
    The scheme \eqref{eq:first} is called path-conservative if the operator $\mathcal D^\pm$ satisfies
    \begin{equation}\label{eq:pathcons}
    \mathcal D^+({\mathbf U}^-,{\mathbf U}^+)+\mathcal D^-(\mathbf U^-,\mathbf U^+)=\int_0^1\mathbf S(\Psi(\xi;\mathbf U^-,\mathbf U^+))B_x'\frac{\partial\Psi}{\partial\xi}(\xi;\mathbf U^-,\mathbf U^+)\mathrm d\xi.
    \end{equation}
    \end{definition}
    
    To achieve path conservation prescribed by \eqref{eq:pathcons}, we define $\mathcal D^\pm$ in \eqref{eq:first} as
    $$ \mathcal D^\pm(\mathbf U^-,\mathbf U^+)=\mathcal R^\pm(\mathbf U^-,\mathbf U^+)\int_0^1\mathbf S(\Psi(\xi))B_x'\Psi'(\xi)\mathrm d\xi.$$
    Here, the ratio $\mathcal R^\pm$ is defined by
    \begin{equation}\label{eq:R} \mathcal R^+(\mathbf U^-,\mathbf U^+)=\frac{\mathcal S_R}{\mathcal S_R-\mathcal S_L},\quad \mathcal R^-(\mathbf U^-,\mathbf U^+)=\frac{-\mathcal S_L}{\mathcal S_R-\mathcal S_L}, 
    \end{equation}
    which satisfies $\mathcal R^+(\mathbf U^-,\mathbf U^+)+\mathcal R^-(\mathbf U^-,\mathbf U^+)=1$. This property ensures that the definition meets the requirement of a path-conservative scheme. We denote $\mathcal R^\pm_{i,i+1}=\mathcal R^\pm(\bar{\mathbf U}_i,\bar{\mathbf U}_{i+1})$. In particular, for LF flux, we have $\mathcal S_R=-\mathcal S_L$, in which case $\mathcal R^+=\mathcal R^-=1/2$; hence the analysis also applies to the LF flux. 

    \subsection{A generalized scheme} 
    Note that Definition \ref{def:path} requires the operator $\mathcal D^\pm$ to use the path $\Psi$ defining the PDE at each cell interface to connect $\bar{\mathbf U}_i^n$ and $\bar{\mathbf U}_{i+1}^n$. However, to establish the entropy stability of high-order schemes, we need to consider a more general case, where $\bar{\mathbf U}_i^n$ and $\bar{\mathbf U}_{i+1}^n$ can be connected by \emph{generalized paths} varying at different cell interfaces.

    \begin{definition}
    For a given set of $\{\bar{\mathbf U}_i^n\}_{i\in\mathbb Z}$, we call a Lipschitz continuous function $\Phi_{i,i+1}:[0,1]\to\mathbb R^p$ a generalized path connecting $\bar{\mathbf U}_i^n$ and $\bar {\mathbf U}_{i+1}^n$ if
    $$ \Phi_{i,i+1}(0)=\bar{\mathbf U}_i^n,\quad \Phi_{i,i+1}(1)=\bar{\mathbf U}_{i+1}^n. $$
    \end{definition}
    
    From the definition, we can see that $\Phi_{i,i+1}$ may vary with $i$, and does not satisfy the third condition in \eqref{eq:pathdef}. Utilizing the generalized path, we modify the scheme \eqref{eq:first} into a more general scheme
    \begin{equation}\label{eq:first2}
    \begin{aligned}
    \bar{\mathbf U}_i^{n+1}=&\,\bar{\mathbf U}_i^n-\lambda\left( \hat{\mathbf F}(\bar{\mathbf U}_i^n,\bar{\mathbf U}_{i+1}^n)-\hat{\mathbf F}(\bar{\mathbf U}_{i-1}^n,\bar{\mathbf U}_{i}^n) \right)
    \\ &-\lambda\left( \mathcal A^-_{i,i+1}(\bar{\mathbf U}_i^n,\bar{\mathbf U}_{i+1}^n)+\mathcal A^+_{i-1,i}(\bar{\mathbf U}_{i-1}^n,\bar{\mathbf U}_i^n) \right),
    \end{aligned}
    \end{equation}
    where
    \begin{equation}\label{eq:A-def}
        \mathcal A^\pm_{i,i+1}(\bar{\mathbf U}_i^n,\bar{\mathbf U}_{i+1}^n)=\mathcal R^\pm_{i,i+1}\int_0^1\mathbf S(\Phi_{i,i+1}(\xi))B_x'\Phi_{i,i+1}'(\xi)\mathrm d\xi 
    \end{equation}
    also depends on $i$. When $\Phi_{i,i+1}(\xi)=\Psi(\xi;\bar{\mathbf U}_i,\bar{\mathbf U}_{i+1})$ for all $i$, \eqref{eq:first2} reduces to \eqref{eq:first}. 
    
    Despite the introduction of different paths, it should be emphasized that a standard path-conservative scheme still employs the same binary operators $\mathcal D^\pm$ at all interfaces $i$. The generalized paths are introduced solely as a theoretical tool to prove the properties of the high-order scheme; thus, \eqref{eq:first2} is never actually implemented in practice for solving the MHD equations. 

    \subsection{A cell entropy inequality}  Following the framework in \cite{kivva2022entropy, kivva2024entropy, liu2026limiter}, to establish the fully-discrete entropy stability, the definition of numerical entropy flux is crucial. For MHD equations, we suggest the following HLL numerical entropy flux 
    \begin{equation}\label{eq:entropyHLL}
    \hat{\mathcal F}(\mathbf U^-,\mathbf U^+)=\frac{\mathcal S_R\mathcal F(\mathbf U^-)-\mathcal S_L\mathcal F(\mathbf U^+)+\mathcal S_R\mathcal S_L(\mathcal U(\mathbf U^+)-\mathcal U(\mathbf U^-))}{\mathcal S_R-\mathcal S_L}.
    \end{equation}
    It can be verified that \begin{equation*}
    \frac{\partial \hat{\mathcal F}}{\partial\mathbf U^{\pm}}(\mathbf U^-,\mathbf U^+)=\mathcal U'(\mathbf U^\pm)\frac{\partial \hat{\mathbf F}}{\partial\mathbf U^{\pm}}(\mathbf U^-,\mathbf U^+)+\mathcal {R^\mp}\phi(\mathbf V^\pm) B_x'.
    \end{equation*}
    Recall that $\phi$ and $B_x'$ were defined in \eqref{eq:phi}.  Moreover, note that we treat $\mathcal S_R,\mathcal S_L$ as fixed constants when taking derivatives. 
    By using the numerical entropy flux \eqref{eq:entropyHLL}, the entropy stability can be established as follows. The proof follows the idea in \cite{kivva2022entropy, liu2026limiter}. For notational convenience, the partial derivatives of $\hat{\mathbf F}$ are denoted by $\hat{\mathbf F}_1$ and $\hat{\mathbf F}_2$, while those of $\hat{\mathcal F}$ are denoted by $\hat{\mathcal F}_1$ and $\hat{\mathcal F}_2$. 
    
    \begin{theorem}\label{thm:ES1st} Assume $\bar{\mathbf U}_i^n\ne \bar{\mathbf U}_{i\pm 1}^n$. Let $L(\mathbf a,\mathbf b)=\{s\mathbf a+(1-s)\mathbf b:s\in [0,1]\}$ be the line segment connecting $\mathbf a$ and $\mathbf b$. Denote $\mathbf d^\pm=\bar{\mathbf U}_{i\pm 1}^n-\bar{\mathbf U}_i^n$, $\mathbf U_\xi^\pm =\bar{\mathbf U}_i^n+\xi\mathbf d^\pm $. If the wave speeds  satisfy
    \begin{equation}\label{eq:speed}
    \begin{aligned}
     \mathcal S_R(\bar{\mathbf U}_i,\bar{\mathbf U}_{i+1})&\ge \frac{\displaystyle \int_0^1\int_0^\xi (\mathbf d^+)^T \mathcal U''(\mathbf U_\zeta^+) {\mathbf F}'(\mathbf U_\xi^+) \mathbf d^+ \mathrm d\zeta\mathrm d\xi+\Delta\mathcal A^+}{\displaystyle \int_0^1\int_0^\xi (\mathbf d^+)^T \mathcal U''(\mathbf U_\zeta^+)  \mathbf d^+ \mathrm d\zeta\mathrm d\xi},
     \\ 
     \mathcal S_L(\bar{\mathbf U}_{i-1},\bar{\mathbf U}_i)&\le  \frac{\displaystyle \int_0^1\int_0^\xi (\mathbf d^-)^T \mathcal U''(\mathbf U_\zeta^-) {\mathbf F}'(\mathbf U_\xi^-) \mathbf d^- \mathrm d\zeta\mathrm d\xi+\Delta \mathcal A^-}{\displaystyle \int_0^1\int_0^\xi (\mathbf d^-)^T \mathcal U''(\mathbf U_\zeta^-)  \mathbf d^- \mathrm d\zeta\mathrm d\xi},
    \end{aligned}
    \end{equation}
    then the scheme \eqref{eq:first2} is entropy stable in the sense of
    \begin{equation*}
    \mathcal U(\bar{\mathbf U}_i^{n+1})\le \mathcal U(\bar{\mathbf U}_i^n)-\lambda\left(\hat{\mathcal F}(\bar{\mathbf U}_i^n,\bar{\mathbf U}_{i+1}^n)-\hat{\mathcal F}(\bar{\mathbf U}_{i-1}^n,\bar{\mathbf U}_{i}^n)\right)
    \end{equation*}
    under the CFL condition 
    \begin{equation}\label{eq:CFL1st}
    0\le  \lambda\le \frac{2(T_2-T_1)}{\left\|T_3\right\|^2}\left(\max\limits_{\xi\in L(\bar{\mathbf U}_i^n,\bar{\mathbf U}_i^{n+1})}\sigma_{\max}(\mathcal{U} ''(\xi))\right)^{-1}.
    \end{equation}
    Here, omitting the superscript $n$, the notations are respectively
    \begin{align*}
        T_1=&\,\hat{\mathcal F}(\bar{\mathbf U}_i,\bar{\mathbf U}_{i+1})-\hat{\mathcal F}(\bar{\mathbf U}_i,\bar{\mathbf U}_i)
    \\ &-\mathcal U'(\bar{\mathbf U}_i)\left( \hat{\mathbf F}(\bar{\mathbf U}_i,\bar{\mathbf U}_{i+1}) - \hat{\mathbf F}(\bar{\mathbf U}_i,\bar{\mathbf U}_i) + \mathcal A_{i,i+1}^-(\bar{\mathbf U}_{i}^n,\bar{\mathbf U}_{i+1}^n)\right),
    \\T_2=&\,\hat{\mathcal F}(\bar{\mathbf U}_{i-1},\bar{\mathbf U}_{i})-\hat{\mathcal F}(\bar{\mathbf U}_i,\bar{\mathbf U}_i)
    \\ &-\mathcal U'(\bar{\mathbf U}_i)\left( \hat{\mathbf F}(\bar{\mathbf U}_{i-1},\bar{\mathbf U}_{i}) - \hat{\mathbf F}(\bar{\mathbf U}_i,\bar{\mathbf U}_i) -\mathcal A_{i-1,i}^+(\bar{\mathbf U}_{i-1}^n,\bar{\mathbf U}_i^n) \right), 
    \\ T_3=&\,\hat{\mathbf F}(\bar{\mathbf U}_i,\bar{\mathbf U}_{i+1}) - \hat{\mathbf F}(\bar{\mathbf U}_{i-1},\bar{\mathbf U}_i) + \mathcal A_{i,i+1}^-(\bar{\mathbf U}_i^n,\bar{\mathbf U}_{i+1}^n)+\mathcal A_{i-1,i}^+(\bar{\mathbf U}_{i-1}^n,\bar{\mathbf U}_i^n) ,
    \\ \Delta \mathcal A^+=&\,\int_0^1\mathcal U'(\mathbf U^+_\xi)\mathbf S(\mathbf U_\xi^+)B_x'\mathbf d^+-\,\mathcal U'(\bar{\mathbf U}_i)\mathbf S(\Phi_{i,i+1}(\xi))B_x'\Phi_{i,i+1}'(\xi)\mathrm d\xi,
    \\ \Delta \mathcal A^-=&\,\int_0^1\mathcal U'(\mathbf U^-_\xi)\mathbf S(\mathbf U_\xi^-)B_x'\mathbf d^-+\,\mathcal U'(\bar{\mathbf U}_i)\mathbf S(\Phi_{i-1,i}(\xi))B_x'\Phi_{i-1,i}'(\xi)\mathrm d\xi.
    \end{align*}
    If $T_3 = 0$, the corresponding CFL restriction is understood to be vacuous.
    \end{theorem}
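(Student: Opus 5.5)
Following the idea of \cite{kivva2022entropy, liu2026limiter}, the plan is to split the one-step entropy change into a first-order part controlled by $T_1,T_2$ and a second-order remainder controlled by the CFL condition \eqref{eq:CFL1st}.

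\textbf{Step 1: Taylor expansion.} Write the update \eqref{eq:first2} as $\bar{\mathbf U}_i^{n+1}=\bar{\mathbf U}_i-\lambda T_3$. Taylor expansion with integral remainder gives
\begin{equation*}
\mathcal U(\bar{\mathbf U}_i^{n+1})\le \mathcal U(\bar{\mathbf U}_i)-\lambda\,\mathcal U'(\bar{\mathbf U}_i)T_3+\frac{\lambda^2}{2}\|T_3\|^2\max_{L(\bar{\mathbf U}_i^n,\bar{\mathbf U}_i^{n+1})}\sigma_{\max}(\mathcal U'').
\end{equation*}

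\textbf{Step 2: rewrite the target.} By consistency $\hat{\mathcal F}(\mathbf U,\mathbf U)=\mathcal F(\mathbf U)$ and $\hat{\mathbf F}(\mathbf U,\mathbf U)=\mathbf F(\mathbf U)$, so inserting and subtracting the diagonal values gives
\begin{equation*}
-\mathcal U'(\bar{\mathbf U}_i)T_3=-\bigl(\hat{\mathcal F}(\bar{\mathbf U}_i,\bar{\mathbf U}_{i+1})-\hat{\mathcal F}(\bar{\mathbf U}_{i-1},\bar{\mathbf U}_i)\bigr)+T_1-T_2 .
\end{equation*}
Hence the desired inequality follows from Step 1 provided
\begin{equation*}
\lambda(T_1-T_2)+\frac{\lambda^2}{2}\|T_3\|^2\sigma_{\max}\le 0 .
\end{equation*}
This holds for $0\le\lambda\le 2(T_2-T_1)/(\|T_3\|^2\sigma_{\max})$, which is exactly \eqref{eq:CFL1st}, as long as $T_1\le 0\le T_2$. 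When $T_3=0$ the update is trivial and only $T_1\le T_2$ is needed. The remaining work is to prove $T_1\le0$ and $T_2\ge0$ from \eqref{eq:speed}.

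\textbf{Step 3 (main obstacle): the sign of $T_1$.} Define $g(\xi)$ as $T_1$ with $\bar{\mathbf U}_{i+1}$ replaced by $\mathbf U^+_\xi$. Two points need care.
\begin{itemize}
\item For the $\hat{\mathcal F}$ and $\hat{\mathbf F}$ parts, keep $\mathcal S_R,\mathcal S_L$ frozen at their values for the pair $(\bar{\mathbf U}_i,\bar{\mathbf U}_{i+1})$. This is legitimate because the HLL formulas \eqref{eq:HLL} and \eqref{eq:entropyHLL} are affine in $\mathcal F,\mathcal U,\mathbf F,\mathbf U$ once the speeds are fixed.
\item The generalized-path term $\mathcal A^-_{i,i+1}$ does not follow the straight segment. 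I would therefore not differentiate it, but treat it as a fixed quantity.
\end{itemize}
Differentiating along the segment with the derivative identity stated before the theorem gives
\begin{equation*}
\frac{\mathrm d}{\mathrm d\xi}\bigl[\hat{\mathcal F}(\bar{\mathbf U}_i,\mathbf U^+_\xi)-\mathcal U'(\bar{\mathbf U}_i)\hat{\mathbf F}(\bar{\mathbf U}_i,\mathbf U^+_\xi)\bigr]
=\bigl(\mathcal U'(\mathbf U^+_\xi)-\mathcal U'(\bar{\mathbf U}_i)\bigr)\hat{\mathbf F}_2\,\mathbf d^+ +\mathcal R^-\phi(\mathbf V^+_\xi)B_x'\mathbf d^+ .
\end{equation*}
Here $\hat{\mathbf F}_2=-\mathcal S_L(\mathbf F'-\mathcal S_R\mathbf I)/(\mathcal S_R-\mathcal S_L)$ and $\mathcal R^-=-\mathcal S_L/(\mathcal S_R-\mathcal S_L)$.

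Now use the homogeneity $\phi(\mathbf V)=\mathbf S(\mathbf U)\cdot\mathbf V=\mathcal U'(\mathbf U)\mathbf S(\mathbf U)$. Then $\phi(\mathbf V^+_\xi)B_x'\mathbf d^+=\mathcal U'(\mathbf U^+_\xi)\mathbf S(\mathbf U^+_\xi)B_x'\mathbf d^+$. After integrating over $\xi\in[0,1]$ and subtracting $\mathcal U'(\bar{\mathbf U}_i)\mathcal A^-_{i,i+1}$, the source contributions combine into exactly $\mathcal R^-\Delta\mathcal A^+$.

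For the flux part, write
\begin{equation*}
\mathcal U'(\mathbf U^+_\xi)-\mathcal U'(\bar{\mathbf U}_i)=\int_0^\xi(\mathbf d^+)^T\mathcal U''(\mathbf U^+_\zeta)\,\mathrm d\zeta .
\end{equation*}
Collecting terms gives
\begin{equation*}
T_1=\frac{-\mathcal S_L}{\mathcal S_R-\mathcal S_L}\Bigl[\iint(\mathbf d^+)^T\mathcal U''\mathbf F'\mathbf d^+ +\Delta\mathcal A^+-\mathcal S_R\iint(\mathbf d^+)^T\mathcal U''\mathbf d^+\Bigr].
\end{equation*}
The prefactor is nonnegative since $\mathcal S_L\le 0<\mathcal S_R-\mathcal S_L$. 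The denominator integral in \eqref{eq:speed} is positive by strict convexity of $\mathcal U$ and $\mathbf d^+\ne0$. So the first condition of \eqref{eq:speed} makes the bracket nonpositive, i.e.\ $T_1\le0$.

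\textbf{Step 4: the sign of $T_2$.} The computation is symmetric. Expand along $\mathbf U^-_\xi$ toward $\bar{\mathbf U}_{i-1}$, using $\hat{\mathbf F}_1$, the weight $\mathcal R^+=\mathcal S_R/(\mathcal S_R-\mathcal S_L)\ge0$, and $\Delta\mathcal A^-$. The second condition of \eqref{eq:speed} then gives $T_2\ge0$.

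The delicate bookkeeping is in Steps 3 and 4. The speeds must be frozen while differentiating, and the generalized-path integral must be kept unexpanded so that it lands exactly in $\Delta\mathcal A^\pm$ with the correct $\mathcal R^\mp$ factor. I expect verifying that this factor matches the prefactor multiplying the flux terms, so that a single bracket appears, to be the main check.
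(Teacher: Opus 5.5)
Your proposal is correct and follows essentially the same route as the paper. Both arguments use the same second-order Taylor bound, which reduces the claim to $\lambda(T_1-T_2)+\frac{\lambda^2}{2}\|T_3\|^2\sigma_{\max}\le 0$, and then evaluate $T_1$ by integrating along $\mathbf U_\xi^+$ (and $T_2$ symmetrically) with frozen wave speeds, the identity $\phi(\mathbf V)=\mathcal U'(\mathbf U)\mathbf S(\mathbf U)$, and the generalized-path term left unexpanded, giving $T_1=\mathcal R^-_{i,i+1}\bigl[\cdots+\Delta\mathcal A^+-\mathcal S_R\cdots\bigr]$; the factor-matching check you single out is exactly the remark the paper makes about $\mathcal R^-_{i,i+1}$ staying fixed along the segment.
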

    \begin{proof}
    By utilizing \eqref{eq:first2}, we have \begin{align*}
    \mathcal{U} &\left( \bar{\mathbf{U}}_{i}^{n+1} \right) -\mathcal{U} \left( \bar{\mathbf{U}}_{i}^{n} \right) +\lambda \left( \hat{\mathcal{F}}\left( \bar{\mathbf{U}}_{i}^{n},\bar{\mathbf{U}}_{i+1}^{n} \right) -\hat{\mathcal{F}}\left( \bar{\mathbf{U}}_{i-1}^{n},\bar{\mathbf{U}}_{i}^{n} \right) \right)
    \\
    =&\,\,\mathcal{U} \left( \bar{\mathbf{U}}_{i}^{n+1} \right) -\mathcal{U} \left( \bar{\mathbf{U}}_{i}^{n} \right) -\mathcal{U} '\left( \bar{\mathbf{U}}_{i}^{n} \right) \left( \bar{\mathbf{U}}_{i}^{n+1}-\bar{\mathbf{U}}_{i}^{n} \right)  \notag
    \\
    &+\lambda \left( \hat{\mathcal{F}}\left( \bar{\mathbf{U}}_{i}^{n},\bar{\mathbf{U}}_{i+1}^{n} \right) -\hat{\mathcal{F}}\left( \bar{\mathbf{U}}_{i-1}^{n},\bar{\mathbf{U}}_{i}^{n} \right) \right) \notag
    \\
    &-\lambda\, \mathcal{U} '\left( \bar{\mathbf{U}}_{i}^{n} \right) \left( \hat{\mathbf{F}}\left( \bar{\mathbf{U}}_{i}^{n},\bar{\mathbf{U}}_{i+1}^{n} \right) -\hat{\mathbf{F}}\left( \bar{\mathbf{U}}_{i-1}^{n},\bar{\mathbf{U}}_{i}^{n} \right) \right) \notag
    \\
    &-\lambda\, \mathcal{U} '\left( \bar{\mathbf{U}}_{i}^{n} \right) \left( \mathcal A_{i,i+1}^-(\bar{\mathbf U}_i^n,\bar{\mathbf U}_{i+1}^n)+\mathcal A_{i-1,i}^+(\bar{\mathbf U}_{i-1}^n,\bar{\mathbf U}_i^n)  \right) \notag
    \\
    \le &\,\frac{\lambda^2}{2}\left(\max\limits_{\xi\in L(\bar{\mathbf U}_i^n,\bar{\mathbf U}_i^{n+1})}\sigma_{\max}(\mathcal{U} ''(\xi))\right) \left\| T_3 \right\| ^2+\lambda \left( T_1-T_2 \right). \notag
    \end{align*} 
    To obtain \eqref{eq:CFL1st}, we only need to show that $T_1\le 0$ and $T_2\ge 0$. For $T_1$, omitting the superscript $n$ and utilizing \eqref{eq:A-def}, we have
    \begin{align*} T_1=&\,\int_{0}^1\hat{\mathcal F}_2(\bar{\mathbf U}_i,\mathbf U_\xi^+)\mathbf d^+\mathrm d\xi-\int_0^1\mathcal U'(\bar{\mathbf U}_i)\hat{\mathbf F}_2(\bar{\mathbf U}_i,\mathbf U_\xi^+)\mathbf d^+\mathrm d\xi
    \\&-\mathcal R^-_{i,i+1}\int_0^1\mathcal U'(\bar{\mathbf U}_i)\mathbf S(\Phi_{i,i+1}(\xi))B_x'\Phi_{i,i+1}'(\xi)\mathrm d\xi
    \\ =&\,\int_0^1\left(\mathcal U'(\mathbf U_\xi^+)-\mathcal U'(\bar{\mathbf U}_i)\right)\hat{\mathbf F}_2(\bar{\mathbf U}_i,\mathbf U_\xi^+)\mathbf d^+ 
    \\ &+\mathcal R_{i,i+1}^-\int_0^1(\mathcal U'(\mathbf U_\xi^+)\mathbf S(\mathbf U_\xi^+)B_x'\mathbf d^+-\mathcal U'(\bar{\mathbf U}_i)\mathbf S(\Phi_{i,i+1}(\xi))B_x'\Phi_{i,i+1}'(\xi))\mathrm d\xi
    \\ =&\, \mathcal R_{i,i+1}^-\int_0^1\int_0^\xi (\mathbf d^+)^T \mathcal U''(\mathbf U_\zeta^+) {\mathbf F}'(\mathbf U_\xi^+)  \mathbf d^+ \mathrm d\zeta\mathrm d\xi+\mathcal R_{i,i+1}^-\Delta \mathcal A^+
    \\ &-\mathcal R_{i,i+1}^-\mathcal S_R(\bar{\mathbf U}_i,\bar{\mathbf U}_{i+1})\int_0^1\int_0^\xi (\mathbf d^+)^T \mathcal U''(\mathbf U_\zeta^+)  \mathbf d^+ \mathrm d\zeta\mathrm d\xi.
    \end{align*}
    Note that in the first equation, the ratio $\mathcal R^-$ in the term $\hat{\mathcal F}_2(\bar {\mathbf U}_i,\mathbf U_\xi^+)$ remains $\mathcal R_{i,i+1}^-$ for all $\mathbf U_\xi^+$ with $0\le\xi\le 1$, but not $\mathcal R^-(\bar {\mathbf U}_i,\mathbf U_\xi^+)$. Note that $\mathcal U''$ is strictly positive definite. Hence, $T_1\le 0$ if 
    $$ \mathcal S_R(\bar{\mathbf U}_i,\bar{\mathbf U}_{i+1})\ge \frac{\displaystyle \int_0^1\int_0^\xi (\mathbf d^+)^T \mathcal U''(\mathbf U_\zeta^+) {\mathbf F}'(\mathbf U_\xi^+) \mathbf d^+ \mathrm d\zeta\mathrm d\xi+\Delta\mathcal A^+}{\displaystyle \int_0^1\int_0^\xi (\mathbf d^+)^T \mathcal U''(\mathbf U_\zeta^+)  \mathbf d^+ \mathrm d\zeta\mathrm d\xi}. $$
    The analysis for the $T_2$ term can proceed analogously. 
    \end{proof}
    \begin{remark}
    Although the term $\max_{\xi\in L(\bar{\mathbf U}_i^n,\bar{\mathbf U}_i^{n+1})}\sigma_{\max}(\mathcal{U} ''(\xi))$ depends on the future time level $t^{n+1}$, making condition \eqref{eq:CFL1st} implicit, it is worth noting that \eqref{eq:CFL1st} is merely a sufficient condition. In practice, a feasible approach is to verify whether the entropy inequality is strictly satisfied after computation with a standard time step. If the entropy inequality is violated, the algorithm rejects the step and recomputes with a smaller time step. This is reasonable since in \cite{liu2026limiter}, we found that \eqref{eq:CFL1st} is close to the standard CFL condition. As a special simplified case, \eqref{eq:CFL1st} will reduce to $\lambda\le 1$ for the linear equation $u_t+u_x=0$ with square entropy.
    \end{remark}

    \begin{remark}\label{remark:path}
    For linear paths, it can be verified that the bounds for $\mathcal S_R$
    and $\mathcal S_L$ in \eqref{eq:speed} remain $\mathcal O(1)$ and can
    be interpreted as weighted averages of the Jacobian of \eqref{eq:MHD1D}.
    More generally, for the polynomial generalized paths arising in the DG
    analysis, uniformly bounded wave-speed estimates can be obtained for every
    fixed polynomial degree by an appropriate subpath decomposition; see
    Remark \ref{rmk:decomposition}.
    \end{remark}

    To ensure entropy stability, we suggest the following setup for the wave speed:
    $$\mathcal S_R=\max\{\mathcal S_R^{\mathrm{std}},\mathcal S_R^{\mathrm{ES}}\},\quad \mathcal S_L=\min\{\mathcal S_L^{\mathrm{std}},\mathcal S_L^{\mathrm{ES}}\},
    $$
    where $\mathcal S_R^{\mathrm{ES}},\mathcal S_{L}^{\mathrm{ES}}$ denote the right-hand sides in \eqref{eq:speed}.  
    For ease of notation, we denote the three-point scheme \eqref{eq:first2} as $\bar{\mathbf U}_i^{n+1}=\mathcal H(\bar{\mathbf U}_{i-1}^n,\bar{\mathbf U}_i^n,\bar{\mathbf U}_{i+1}^n;\Phi_{i-1,i},\Phi_{i,i+1};\lambda)$, 
    and its ES CFL condition \eqref{eq:CFL1st} as
    $ \lambda\le \mathscr T(\bar{\mathbf U}_{i-1}^n,\bar{\mathbf U}_i^n,\bar{\mathbf U}_{i+1}^n;\Phi_{i-1,i},\Phi_{i,i+1})$. 
    
	\section{High-order ES scheme in one dimension} \label{sec4}
    
    Now, we construct the high-order ES DG scheme for \eqref{eq:MHD1D}. Denote the finite element space 
    \begin{equation*}
    V_h^k=\{w(x): w(x)|_{I_i}\in \mathbb P^k(I_i),\ \forall i\}.
    \end{equation*}
    Then the vector-valued DG space $\mathbf V_h^k$ can be constructed by $\mathbf V_h^k=[V_h^k]^p$. The semi-discrete DG scheme reads: find $\mathbf U_h\in \mathbf V_{h}^k$, such that for any $\mathbf W\in \mathbf V_{h}^k$ and $I_i$,
    \begin{equation}\label{eq:DGsemi}
    \begin{aligned}
    \int_{I_i}\frac{\partial \mathbf U_h}{\partial t}\cdot\mathbf W\mathrm dx
    =&\,\int_{I_i}\mathbf F(\mathbf U_h)\cdot\frac{\partial \mathbf W}{\partial x}\mathrm dx-\hat{\mathbf F}_{i+\frac{1}{2}}\cdot\mathbf W_{i+\frac{1}{2}}^-+\hat{\mathbf F}_{i-\frac{1}{2}}\cdot\mathbf W_{i-\frac{1}{2}}^+
    \\  &-\mathcal D_{i+\frac{1}{2}}^-\cdot\mathbf W_{i+\frac{1}{2}}^--\mathcal D_{i-\frac{1}{2}}^+\cdot\mathbf W_{i-\frac{1}{2}}^+-\int_{I_i}\mathbf S(\mathbf U_h)\cdot\mathbf W\frac{\partial B_{x,h}}{\partial x}\mathrm dx.
    \end{aligned}\end{equation}
    Here, $\hat{\mathbf F}_{i+1/2}=\hat{\mathbf F}(\mathbf U_{i+1/2}^-,\mathbf U_{i+1/2}^+)$ and
    $$
    \mathcal D^\pm_{i+\frac{1}{2}}=\mathcal R^\pm_{i+\frac{1}{2}}\int_0^1\mathbf S(\Psi_{i+\frac{1}{2}}(\xi))B_x'\Psi_{i+\frac{1}{2}}'(\xi)\mathrm d\xi,\quad \mathcal R_{i+\frac{1}{2}}^\pm=\mathcal R^\pm(\mathbf U_{i+\frac{1}{2}}^-,\mathbf U_{i+\frac{1}{2}}^+),
    $$
    where $\Psi_{i+1/2}(\xi)=\Psi(\xi;\mathbf U_{i+1/2}^-,\mathbf U_{i+1/2}^+)$ is the path connecting $\mathbf U_{i+1/2}^-$ and $\mathbf U_{i+1/2}^+$. By using the method-of-lines framework, \eqref{eq:DGsemi} can be treated as an ODE system
    \begin{equation*}
    \frac{\mathrm d\mathbf U_h}{\mathrm dt}=\mathcal L_h(\mathbf U_h), 
    \end{equation*}
    where $\mathcal L_h: \mathbf V_h^k \to \mathbf V_h^k$ is defined by the right-hand side of \eqref{eq:DGsemi}. One can employ a time discretization method to obtain the fully-discrete scheme.

    Our subsequent entropy analysis will rely on the notion of Gauss--Lobatto quadrature, for which we denote by $\hat x_1,\dots,\hat x_N$ the quadrature points on $I_i$ and  $\omega_1+\dots+\omega_N=1$ the corresponding quadrature weights. We denote the $N$-point Gauss--Lobatto quadrature of $w(x)$ with $N\ge k+1$ on $I_i$ by $\int_{I_i}^{\langle N\rangle }w(x)\mathrm dx$ and define $\tilde{\mathcal U}_i^n=\frac{1}{h}\int_{I_i}^{\langle N\rangle}\mathcal U(\mathbf U_h^n)\mathrm dx$.
    
    Following our previous work \cite{liu2026limiter}, we will first prove that the forward Euler DG method satisfies a weak cell entropy inequality    \begin{equation}\label{eq:ESlike}\mathcal U(\bar{\mathbf U}_i^{n+1})\le \tilde{\mathcal U}_i^n-\lambda(\hat{\mathcal F}_{i+\frac{1}{2}}-\hat{\mathcal F}_{i-\frac{1}{2}}),
    \end{equation}
    then use the scaling limiter to enforce the genuine cell entropy inequality \begin{equation}\label{eq:ESdiscrete}\tilde{\mathcal U}_i^{n+1}\le \tilde{\mathcal U}_i^n-\lambda(\hat{\mathcal F}_{i+\frac{1}{2}}-\hat{\mathcal F}_{i-\frac{1}{2}}),\end{equation}
    and finally extend it to high-order time stepping with SSP multistep methods \cite{gottlieb2001strong}. 
    
    \subsection{A weak cell entropy inequality for cell averages} 
    The forward Euler fully-discrete scheme reads:
    \begin{equation}\label{eq:EFDG}
    \mathbf U_h^{n+1,(\mathrm{pre})}=\mathbf U_h^n+\Delta t\cdot\mathcal L_h(\mathbf U_h^n),
    \end{equation}
    Here, $\mathbf U_h^{n+1,(\mathrm{pre})}$ is a predicted state that will be limited to a new state $\mathbf U^{n+1}_h$ satisfying $\bar{\mathbf U}_i^{n+1} = \bar{\mathbf U}_i^{n+1,(\mathrm{pre})}$.  
        Therefore, 
    \begin{equation}\label{eq:EF1st}
    \begin{aligned}
    \bar{\mathbf U}_i^{n+1}=&\,\bar{\mathbf U}_i^n-\lambda(\hat{\mathbf F}_{i+\frac{1}{2}}-\hat{\mathbf F}_{i-\frac{1}{2}})
    -\lambda(\mathcal D_{i+\frac{1}{2}}^-+\mathcal D_{i-\frac{1}{2}}^+)
    -\lambda\int_{I_i}\mathbf S(\mathbf U_h^n)\frac{\partial B_{x,h}^n}{\partial x}\mathrm dx.
    \end{aligned}
    \end{equation}

Although the predicted stage $\mathbf U_h^{n+1,(\mathrm{pre})}$ may violate the genuine cell entropy inequality \eqref{eq:ESdiscrete}, it admits the weak cell entropy inequality \eqref{eq:ESlike} under the CFL condition in Theorem \ref{thm:ESlike1D}, which is one of the key contributions of our work. For homogeneous conservation laws, such a weak cell entropy inequality was first proved in \cite{carlier2023invariant}. However, for the MHD equations, the proof is nontrivial and relies on a careful prescription of generalized paths within the convex decomposition. 

\begin{theorem}\label{thm:ESlike1D}
The scheme \eqref{eq:EF1st} satisfies the weak cell entropy inequality
\eqref{eq:ESlike} in each of the following cases.

{(1) Distinct endpoint values:}
If $\mathbf U_{i-1/2}^+\ne\mathbf U_{i+1/2}^-$, then
\eqref{eq:ESlike} holds under
\begin{equation}\label{eq:CFL1D}
\begin{aligned}
\lambda&\le \omega_1
\mathscr T(\mathbf U_{i-\frac12}^-,
           \mathbf U_{i-\frac12}^+,
           \mathbf U_{i+\frac12}^-;
           \Psi_{i-\frac12},\Phi_i),\\
\lambda&\le \omega_N
\mathscr T(\mathbf U_{i-\frac12}^+,
           \mathbf U_{i+\frac12}^-,
           \mathbf U_{i+\frac12}^+;
           \Phi_i,\Psi_{i+\frac12}),
\end{aligned}
\end{equation}
where
\[
\Phi_i(\xi)=
\mathbf U_h^n|_{I_i}
\left(x_{i-\frac12}
+\xi(x_{i+\frac12}-x_{i-\frac12})\right).
\]

{(2) Coincident endpoint values:}
Suppose
$
\mathbf U_{i-1/2}^+
=\mathbf U_{i+1/2}^-=: \mathbf U_f$.

\begin{itemize}
\item[(2a)]
Let $\mathbf U_q:=\mathbf U_h^n(\hat x_q)$. If there exists $s\in\{2,\cdots,N-1\}$ such that
$\mathbf U_s\ne\mathbf U_f$, then \eqref{eq:ESlike} holds under
\begin{equation}\label{eq:CFL1D2}
\begin{aligned}
\lambda&\le \omega_1
\mathscr T(\mathbf U_{i-\frac12}^-,
           \mathbf U_{i-\frac12}^+,
           \mathbf U_s;
           \Psi_{i-\frac12},\Phi_i^L),\\
\lambda&\le \omega_s
\mathscr T(\mathbf U_{i-\frac12}^+,
           \mathbf U_s,
           \mathbf U_{i+\frac12}^-;
           \Phi_i^L,\Phi_i^R),\\
\lambda&\le \omega_N
\mathscr T(\mathbf U_s,
           \mathbf U_{i+\frac12}^-,
           \mathbf U_{i+\frac12}^+;
           \Phi_i^R,\Psi_{i+\frac12}),
\end{aligned}
\end{equation}
where $\Phi_i^L$ and $\Phi_i^R$ are the reparametrized portions of
$\Phi_i$ from $\mathbf U_{i-1/2}^+$ to $\mathbf U_s$ and from
$\mathbf U_s$ to $\mathbf U_{i+1/2}^-$, respectively.

\item[(2b)]
If no such $s$ exists, then
$\mathbf U_h^n|_{I_i}\equiv\mathbf U_f$, and \eqref{eq:ESlike} holds under
\begin{equation}\label{eq:CFL1Dconst}
\lambda\le
\mathscr T(\mathbf U_{i-\frac12}^-,
           \mathbf U_f,
           \mathbf U_{i+\frac12}^+;
           \Psi_{i-\frac12},\Psi_{i+\frac12}).
\end{equation}
If one of the two exterior states coincides with $\mathbf U_f$, the
corresponding zero-jump condition in Theorem \ref{thm:ES1st} is omitted;
if both coincide with $\mathbf U_f$, no CFL restriction is required.
\end{itemize}
\end{theorem}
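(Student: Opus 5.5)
We follow the Zhang--Shu-type convex decomposition used in \cite{carlier2023invariant,liu2026limiter}, with one extra ingredient: the interior source integral is rewritten as a path integral along the polynomial path $\Phi_i$.

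\textbf{Step 1: rewrite the source integral.} Substituting $x=x_{i-1/2}+\xi h$ gives
\[
\lambda\int_{I_i}\mathbf S(\mathbf U_h^n)\frac{\partial B_{x,h}^n}{\partial x}\mathrm dx=\lambda\int_0^1\mathbf S(\Phi_i(\xi))B_x'\Phi_i'(\xi)\mathrm d\xi.
\]
This holds because $\partial_x B_{x,h}=B_x'\partial_x\mathbf U_h$. Since $\mathcal R^+_{\Phi_i}+\mathcal R^-_{\Phi_i}=1$, we split this integral as $\mathcal A^-+\mathcal A^+$ along $\Phi_i$. We must use the ratios $\mathcal R^\pm$ associated with the internal fictitious interface between $\mathbf U_{i-1/2}^+$ and $\mathbf U_{i+1/2}^-$, so that the split matches the HLL flux used there.

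\textbf{Step 2: decompose the cell average (case (1)).} Exactness of Gauss--Lobatto quadrature for $\bar{\mathbf U}_i^n$ gives $\bar{\mathbf U}_i^n=\sum_q\omega_q\mathbf U_q$. We add and subtract $\pm\lambda\hat{\mathbf F}(\mathbf U_{i-1/2}^+,\mathbf U_{i+1/2}^-)$ together with the internal $\mathcal A^\pm$ pieces. This writes \eqref{eq:EF1st} as
\[
\bar{\mathbf U}_i^{n+1}=\omega_1\mathcal H\bigl(\mathbf U_{i-\frac12}^-,\mathbf U_{i-\frac12}^+,\mathbf U_{i+\frac12}^-;\Psi_{i-\frac12},\Phi_i;\tfrac{\lambda}{\omega_1}\bigr)+\omega_N\mathcal H\bigl(\mathbf U_{i-\frac12}^+,\mathbf U_{i+\frac12}^-,\mathbf U_{i+\frac12}^+;\Phi_i,\Psi_{i+\frac12};\tfrac{\lambda}{\omega_N}\bigr)+\sum_{q=2}^{N-1}\omega_q\mathbf U_q.
\]
Two points need checking here:
\begin{itemize}
\item The interior terms telescope correctly: the $\mathcal A^-$ part of the internal path sits in the first scheme and the $\mathcal A^+$ part in the second, and together they reproduce the full source integral from Step 1.
\item The exterior interfaces carry exactly $\mathcal D^\pm_{i\mp1/2}$. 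This works because $\mathcal A^\pm$ reduces to $\mathcal D^\pm$ when the generalized path is $\Psi$.
\end{itemize}

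\textbf{Step 3: apply convexity and the three-point entropy inequality.} By Jensen,
\[
\mathcal U(\bar{\mathbf U}_i^{n+1})\le\omega_1\mathcal U(\mathcal H_1)+\omega_N\mathcal U(\mathcal H_N)+\sum_{q=2}^{N-1}\omega_q\mathcal U(\mathbf U_q).
\]
We then apply Theorem \ref{thm:ES1st} to each $\mathcal H$ with time ratio $\lambda/\omega_{1}$ or $\lambda/\omega_N$; condition \eqref{eq:CFL1D} is exactly what makes this admissible. The internal numerical entropy fluxes $\hat{\mathcal F}(\mathbf U_{i-1/2}^+,\mathbf U_{i+1/2}^-)$ cancel between the two pieces, leaving
\[
\sum_q\omega_q\mathcal U(\mathbf U_q)-\lambda\bigl(\hat{\mathcal F}_{i+\frac12}-\hat{\mathcal F}_{i-\frac12}\bigr)=\tilde{\mathcal U}_i^n-\lambda\bigl(\hat{\mathcal F}_{i+\frac12}-\hat{\mathcal F}_{i-\frac12}\bigr),
\]
which is \eqref{eq:ESlike}. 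Theorem \ref{thm:ES1st} also requires distinct adjacent states. Where an exterior jump vanishes, the corresponding flux and $\mathcal D$ contributions are consistent and vanish, so we simply omit that condition.

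\textbf{Case (2).} In case (2a) the internal path $\Phi_i$ is a closed loop, so the internal HLL flux difference between the equal endpoints is useless. We therefore split $\Phi_i$ at $\mathbf U_s$ into $\Phi_i^L$ and $\Phi_i^R$. Since $\int_{\Phi_i}=\int_{\Phi_i^L}+\int_{\Phi_i^R}$, each sub-integral can be split with its own $\mathcal R^\pm$. The decomposition now uses three schemes with weights $\omega_1,\omega_s,\omega_N$, and the internal fluxes at both sub-interfaces telescope as before. In case (2b), $\mathbf U_h^n$ is constant on $I_i$, so the source integral vanishes and \eqref{eq:EF1st} is exactly the first-order scheme. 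Theorem \ref{thm:ES1st} then applies directly, and when $\mathbf U_h^n$ is constant the weak and genuine entropies coincide.

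\textbf{Main obstacle.} The hardest part is the bookkeeping in Step 2: verifying that the $\mathcal R^\pm$ weights from the internal fictitious flux make the split source terms and fluxes fit exactly into the form \eqref{eq:first2}, so that Theorem \ref{thm:ES1st} applies with its $\Delta\mathcal A^\pm$ defined along the generalized path. I would first write out the $N=2$ case explicitly to confirm the term matching.
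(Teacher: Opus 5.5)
Your proposal is correct and follows essentially the same argument as the paper. You rewrite the volume source as a path integral along the polynomial path $\Phi_i$ and split it with the ratios $\mathcal R_i^\pm$ of the fictitious internal interface. You then decompose the cell-average update as $\omega_1\mathcal H_1+\omega_N\mathcal H_N+\sum_{q=2}^{N-1}\omega_q\mathbf U_q$ and obtain \eqref{eq:ESlike} from Jensen's inequality and Theorem \ref{thm:ES1st}, with the internal entropy fluxes telescoping. Your treatment of case (2a), splitting $\Phi_i$ at $\mathbf U_s$ into $\Phi_i^L,\Phi_i^R$ with their own $\mathcal R^\pm$, and of case (2b), reducing to the first-order three-point scheme, also coincides with the paper's proof.
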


\begin{proof}
 Since $N\ge k+1$, we have 
$
\bar{\mathbf U}_i^n=\sum_{q=1}^N\omega_q\mathbf U_q.$

\textnormal{(1)}
Set
\[
\hat{\mathbf F}_i
=\hat{\mathbf F}(\mathbf U_{i-\frac12}^+,\mathbf U_{i+\frac12}^-),
\qquad
\mathcal A_i^\pm
=\mathcal R_i^\pm
\int_0^1
\mathbf S(\Phi_i(\xi ))B_x'\Phi_i'(\xi)\,\mathrm d\xi ,
\]
where
$\mathcal R_i^\pm
=\mathcal R^\pm(\mathbf U_{i-1/2}^+,\mathbf U_{i+1/2}^-)$.
Since $\mathcal R_i^++\mathcal R_i^-=1$, the volume source term in \eqref{eq:EF1st} equals
$\mathcal A_i^++\mathcal A_i^-$. Hence, using $\bar{\mathbf U}_i^n=\sum_{q=1}^N\omega_q\mathbf U_q$,
\eqref{eq:EF1st} can be written as 
\begin{align*}
\bar{\mathbf U}_i^{n+1}
={}& \sum_{q=1}^{N}\omega_q\mathbf U_q -\lambda(\hat{\mathbf F}_{i+\frac{1}{2}}-\hat{\mathbf F}_{i-\frac{1}{2}})
    -\lambda(\mathcal D_{i+\frac{1}{2}}^-+\mathcal D_{i-\frac{1}{2}}^+)-\lambda(\mathcal A_{i}^-+\mathcal A_{i}^+)\\
=& \omega_1\left[
\mathbf{U}_{i-\frac12}^+ 
-\frac{\lambda}{\omega_1}
\bigl(\hat{\mathbf F}_i-\hat{\mathbf F}_{i-\frac12}\bigr)
-\frac{\lambda}{\omega_1}
\bigl(\mathcal A_i^-+\mathcal D^+_{i-\frac12}\bigr)
\right]
\\
&+
\omega_N\left[
\mathbf U_{i+\frac12}^-
-\frac{\lambda}{\omega_N}
\bigl(\hat{\mathbf F}_{i+\frac12}-\hat{\mathbf F}_i\bigr)
-\frac{\lambda}{\omega_N}
\bigl(\mathcal D^-_{i+\frac12}+\mathcal A_i^+\bigr)
\right]
+\sum_{q=2}^{N-1}\omega_q\mathbf U_q
\\
={}&
\omega_1\mathcal H_1+\omega_N\mathcal H_N
+\sum_{q=2}^{N-1}\omega_q\mathbf U_q ,
\end{align*}
with
\begin{align*}
\mathcal H_1=&
\mathcal H\left(
\mathbf U_{i-\frac12}^-,
\mathbf U_{i-\frac12}^+,
\mathbf U_{i+\frac12}^-;
\Psi_{i-\frac12},\Phi_i;
\frac{\lambda}{\omega_1}\right),\\
\mathcal H_N=&
\mathcal H\left(
\mathbf U_{i-\frac12}^+,
\mathbf U_{i+\frac12}^-,
\mathbf U_{i+\frac12}^+;
\Phi_i,\Psi_{i+\frac12};
\frac{\lambda}{\omega_N}\right).
\end{align*}
By the convexity of $\mathcal U$,
\[
\begin{aligned}
\mathcal U(\bar{\mathbf U}_i^{n+1})
&\le
\omega_1\mathcal U(\mathcal H_1)
+\omega_N\mathcal U(\mathcal H_N)
+\sum_{q=2}^{N-1}\omega_q\mathcal U(\mathbf U_q).
\end{aligned}
\]
Under the CFL conditions \eqref{eq:CFL1D}, Theorem~\ref{thm:ES1st}
applied to $\mathcal H_1$ and $\mathcal H_N$ gives
\[
\mathcal U(\mathcal H_1)
\le
\mathcal U(\mathbf U_1)
-\frac{\lambda}{\omega_1}
\left(
\hat{\mathcal F}_{i}
-\hat{\mathcal F}_{i-\frac12}
\right), \quad 
\mathcal U(\mathcal H_N)
\le
\mathcal U(\mathbf U_N)
-\frac{\lambda}{\omega_N}
\left(
\hat{\mathcal F}_{i+\frac12}
-\hat{\mathcal F}_{i}
\right),
\]
where
$\hat{\mathcal F}_{i}
=
\hat{\mathcal F}
(
\mathbf U_{i-1/2}^+,\mathbf U_{i+1/2}^-
)$ 
is the numerical entropy flux associated with the artificial interior interface.
Consequently,
\[
\begin{aligned}
\mathcal U(\bar{\mathbf U}_i^{n+1})
\le
\sum_{q=1}^{N}\omega_q\mathcal U(\mathbf U_q)
-\lambda
\left(
\hat{\mathcal F}_{i}
-\hat{\mathcal F}_{i-\frac12}
+
\hat{\mathcal F}_{i+\frac12}
-\hat{\mathcal F}_{i}
\right) =
\widetilde{\mathcal U}_i^n
-\lambda
\left(
\hat{\mathcal F}_{i+\frac12}
-\hat{\mathcal F}_{i-\frac12}
\right),
\end{aligned}
\]
which is precisely \eqref{eq:ESlike}.

\textnormal{(2a)}
For $D=L,R$, define
\[
\mathcal A_D^\pm
=
\mathcal R_D^\pm
\int_0^1
\mathbf S(\Phi_i^D(\xi))B_x'(\Phi_i^D(\xi))'\,\mathrm d\xi  \]
with 
\[
\mathcal R_L^\pm
=\mathcal R^\pm(\mathbf U_{i-\frac12}^+,\mathbf U_s),\quad   
\mathcal R_R^\pm
=\mathcal R^\pm(\mathbf U_s,\mathbf U_{i+\frac12}^-).\]
As $\Phi_i^L$ and $\Phi_i^R$ partition $\Phi_i$, the volume source
term is 
$\mathcal A_L^++\mathcal A_L^-
+\mathcal A_R^++\mathcal A_R^-$.  Introducing $
\hat{\mathbf F}_L
=\hat{\mathbf F}(\mathbf U_{i-1/2}^+,\mathbf U_s), \ 
\hat{\mathbf F}_R
=\hat{\mathbf F}(\mathbf U_s,\mathbf U_{i+1/2}^-)$, the update can be decomposed as
\[
\bar{\mathbf U}_i^{n+1}
=
\omega_1\mathcal H_1+\omega_s\mathcal H_s+\omega_N\mathcal H_N
+\sum_{q\ne1,s,N}\omega_q\mathbf U_q ,
\]
where $\mathcal H_1,\mathcal H_s,\mathcal H_N$ are precisely the three-point schemes
appearing in \eqref{eq:CFL1D2}. Applying
Theorem \ref{thm:ES1st} to these three terms and using convexity of
$\mathcal U$, the intermediate numerical entropy fluxes telescope, yielding
\eqref{eq:ESlike}.

\textnormal{(2b)}
If no such $s$ exists, then
$\mathbf U_q=\mathbf U_f$ for all $q=1,\ldots,N$. Since
$\mathbf U_h^n|_{I_i}\in[\mathbb P^k(I_i)]^p$ and $N\ge k+1$, polynomial
unisolvence implies
$
\mathbf U_h^n|_{I_i}\equiv\mathbf U_f$.
Thus the volume source term vanishes,
$\tilde{\mathcal U}_i^n=\mathcal U(\mathbf U_f)$, and
\eqref{eq:EF1st} reduces to the first-order three-point scheme
\[
\bar{\mathbf U}_i^{n+1}
=
\mathcal H(\mathbf U_{i-\frac12}^-,
           \mathbf U_f,
           \mathbf U_{i+\frac12}^+;
           \Psi_{i-\frac12},\Psi_{i+\frac12};\lambda).
\]
Theorem \ref{thm:ES1st} therefore gives \eqref{eq:ESlike} under
\eqref{eq:CFL1Dconst}; if one of the two jumps vanishes, the corresponding
term in the proof of Theorem \ref{thm:ES1st} is identically zero and its
wave-speed condition is vacuous.
\end{proof}

    \begin{remark}
    The proof of this theorem is one of the main highlights of this paper. \textit{The key to this proof is to notice that the internal source term integral can be naturally viewed as a generalized path integral connecting two internal states $\mathbf U_{i-1/2}^+,\mathbf U_{i+1/2}^-$ via the solution polynomial itself}, which facilitates the construction of the corresponding three-point scheme. We note again that the scheme \eqref{eq:DGsemi} remains the standard path-conservative DG scheme; the generalized form \eqref{eq:first2} is introduced only as an analytical tool for constructing the entropy inequalities of the three-point building blocks.
    \end{remark}
    \begin{remark}\label{rmk:decomposition}The decomposition in part {\rm (2a)} is not restricted to exactly coincident endpoint
values. For every fixed polynomial degree $k$, an appropriate decomposition of the
polynomial generalized path can be selected such that the variation of each subpath
is comparable to the distance between its endpoint states. For a subpath with endpoint difference $\mathbf d$, the usual boundedness assumptions
on the numerical states imply that the numerator in \eqref{eq:speed} is
$\mathcal O(\|\mathbf d\|^2)$, whereas the denominator is bounded below by
$C\|\mathbf d\|^2$ uniformly. Therefore, the corresponding entropy-stability
wave-speed bounds remain $\mathcal O(1)$, independently of the mesh size and of how
close the original endpoint states are. The decomposition is used only in the entropy analysis and does
not modify the implemented DG scheme.
    \end{remark} 

    \begin{remark}\label{rmk:path}
    It can be seen that the above entropy analysis is valid for any prescribed interface path $\Psi$ satisfying Definition \ref{def:pathfun}, as long as $\Phi_i$ is chosen as the polynomial generalized path induced by $\mathbf U_h^n$ in the proof.
    In other words, for any given path $\Psi$ that defines the non-conservative product in Godunov’s form of the MHD system, the scheme \eqref{eq:EF1st} formally preserves the path and weak entropy inequality. Theorem \ref{thm:ESlike1D} ensures that we can choose $\Psi$ as the physically relevant path without affecting the weak entropy stability. 
        \end{remark}
    \begin{remark}[Connections with other existing solvers]\label{rmk:other} For linear paths \eqref{eq:linpath}, note that $B_x'$ is constant, and thus
    $$B_x'(\mathbf U_{i+\frac{1}{2}}^+ -\mathbf  U_{i+\frac{1}{2}}^-) = B_{x,i+\frac{1}{2}}^+-B_{x,i+\frac{1}{2}}^-=:[B_{x,h}]_{i+\frac{1}{2}}.$$
    Then, scheme \eqref{eq:DGsemi} can be written as 
    \begin{align}\label{eq:DGseminew}
    \int_{I_i}\frac{\partial \mathbf U_h}{\partial t}\cdot\mathbf W\mathrm dx
     =&\,\int_{I_i}\mathbf F(\mathbf U_h)\cdot\frac{\partial \mathbf W}{\partial x}\mathrm dx-\hat{\mathbf F}_{i+\frac{1}{2}}\cdot\mathbf W_{i+\frac{1}{2}}^-+\hat{\mathbf F}_{i-\frac{1}{2}}\cdot\mathbf W_{i-\frac{1}{2}}^+
    \\  &-\mathcal R_{i+\frac{1}{2}}^-\bar{\mathbf S}_{i+\frac{1}{2}}\cdot\mathbf W_{i+\frac{1}{2}}^-[B_{x,h}]_{i+\frac{1}{2}}-\mathcal R_{i-\frac{1}{2}}^+\bar{\mathbf S}_{i-\frac{1}{2}}\cdot\mathbf W_{i-\frac{1}{2}}^+[B_{x,h}]_{i-\frac{1}{2}} \notag
    \\&-\int_{I_i}\mathbf S(\mathbf U_h)\cdot\mathbf W\frac{\partial B_{x,h}}{\partial x}\mathrm dx. \notag
    \end{align}
    Here, $\bar{\mathbf S}_{i+1/2}=\int_0^1\mathbf S(\mathbf U_{i+1/2}^-+\xi(\mathbf U_{i+1/2}^+-\mathbf U_{i+1/2}^-))\mathrm d\xi$. 
    Now, it can be seen that scheme \eqref{eq:DGseminew} is very similar to the classical DG schemes solving \eqref{eq:MHDsym} in \cite{liu2018entropy, wu2018provably, liu2025structureMHD}, where the only difference is that $\bar{\mathbf S}_{i\pm 1/2}$ is replaced by the single-sided limits $\mathbf S(\mathbf U_{i\pm 1/2}^\mp)$ in these works. 
    It is worth noting that these classical forms do not strictly satisfy \eqref{eq:pathcons}. Instead, we have
    $$\mathcal D_{i+\frac{1}{2}}^++\mathcal D_{i+\frac{1}{2}}^-=(\mathcal R_{i+\frac{1}{2}}^+\mathbf S(\mathbf U_{i+\frac{1}{2}}^+)+\mathcal R_{i+\frac{1}{2}}^-\mathbf S(\mathbf U_{i+\frac{1}{2}}^-))\int_0^1B_x'\Psi_{i+\frac{1}{2}}'(\xi)\mathrm d\xi,$$
    which is essentially a low-order approximation to the path integral for the linear path. Nevertheless, by using the same analytical framework, an analogue of Theorem \ref{thm:ES1st} can also be established for such schemes. Specifically, the entropy-stability wave-speed bounds in \eqref{eq:speed} can be modified to provide sufficient dissipation such that $T_1\le 0$ and $T_2\ge 0$, which yields the corresponding weak cell entropy inequality. We omit the detailed proof for brevity.
    \end{remark}

    The remaining parts of this section follow closely the work in \cite{liu2026limiter}.
    \subsection{Scaling limiter and genuine cell entropy inequality}

  Despite the gap between \eqref{eq:ESlike} and \eqref{eq:ESdiscrete}, an important result in \cite{chen2017entropy} implies that scaling the numerical solution toward its cell average 
    \begin{equation}\label{eq:limiter}
    \mathbf U_h^{n+1}|_{I_i}=\bar{\mathbf U}_i^{n+1}+\theta_i(\mathbf U_h^{n+1,(\mathrm{pre})}|_{I_i}-\bar{\mathbf U}_i^{n+1})
    \end{equation}
    will not increase the cell entropy. Consequently, we can achieve the genuine cell entropy inequality by scaling the solution towards its cell average. In our previous work \cite{liu2026limiter}, we proved the following lemma for hyperbolic conservation laws, which holds similarly in the present context.
    \begin{lemma}
    For the solution $\mathbf U_h^{n+1,(\mathrm{pre})}$ computed by \eqref{eq:EFDG}, denote
    $$
    \mathcal U^{\mathrm{high}}=\tilde{\mathcal U}_i^{n+1,(\mathrm{pre})},\quad \mathcal U^{\mathrm{up}}=\tilde{\mathcal U}_i^n-\lambda(\hat{\mathcal F}_{i+\frac{1}{2}}-\hat{\mathcal F}_{i-\frac{1}{2}}),\quad \mathcal U^{\mathrm{1st}}=\mathcal U(\bar{\mathbf U}_i^{n+1}),
    $$
    and let
    \begin{equation}\label{eq:theta}
    \theta_i=\min\left\{\frac{\mathcal U^{\mathrm{up}}-\mathcal U^{\mathrm{1st}}}{\mathcal U^{\mathrm{high}}-\mathcal U^{\mathrm{1st}}}, 1\right\}, \quad \text{with }\theta_i = 1 \text{ if }\  \mathcal U^{\mathrm{high}}=\mathcal U^{\mathrm{1st}}.
    \end{equation}
    Then the limited solution \eqref{eq:limiter} satisfies the genuine discrete cell entropy inequality \eqref{eq:ESdiscrete}. As a corollary, the solution is ES in the sense of
    $\sum\limits_{i}\tilde{\mathcal U}_i^{n+1}\le \sum\limits_{i}\tilde{\mathcal U}_i^n$.
    \end{lemma}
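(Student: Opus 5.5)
The plan is to write the limited cell polynomial as an interpolation between the cell average and the predicted polynomial, and to reduce the claim to convexity of a scalar function of the scaling parameter. Since $\sum_q\omega_q=1$ and the $N$-point Gauss--Lobatto rule with $N\ge k+1$ integrates $\mathbb P^k$ exactly, the limiter \eqref{eq:limiter} preserves the cell average. Hence $\bar{\mathbf U}_i^{n+1}$ is unchanged, and $\mathcal U^{\mathrm{1st}}$ and $\mathcal U^{\mathrm{up}}$ do not depend on $\theta_i$. At each quadrature point $\hat x_q$, the limited value is
\[
\bar{\mathbf U}_i^{n+1}+\theta\bigl(\mathbf U_q^{(\mathrm{pre})}-\bar{\mathbf U}_i^{n+1}\bigr).
\]
Define
\[
g(\theta)=\sum_{q=1}^N\omega_q\,\mathcal U\bigl(\bar{\mathbf U}_i^{n+1}+\theta(\mathbf U_q^{(\mathrm{pre})}-\bar{\mathbf U}_i^{n+1})\bigr),
\]
so that $\tilde{\mathcal U}_i^{n+1}=g(\theta_i)$. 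Then $g(0)=\mathcal U^{\mathrm{1st}}$ and $g(1)=\mathcal U^{\mathrm{high}}$. Because $\mathcal U$ is convex, $g$ is convex on $[0,1]$. Convexity gives the chord bound
\[
g(\theta)\le(1-\theta)\,\mathcal U^{\mathrm{1st}}+\theta\,\mathcal U^{\mathrm{high}}=\mathcal U^{\mathrm{1st}}+\theta(\mathcal U^{\mathrm{high}}-\mathcal U^{\mathrm{1st}}).
\]
This is the result of \cite{chen2017entropy} mentioned above. Jensen's inequality for the positive weights $\omega_q$ also gives $\mathcal U^{\mathrm{high}}\ge\mathcal U^{\mathrm{1st}}$.

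The key input is Theorem \ref{thm:ESlike1D}: under its CFL conditions, $\mathcal U^{\mathrm{1st}}\le\mathcal U^{\mathrm{up}}$. This is exactly \eqref{eq:ESlike}. The proof then splits into cases on \eqref{eq:theta}.
\begin{itemize}
\item If $\mathcal U^{\mathrm{high}}=\mathcal U^{\mathrm{1st}}$, then $\theta_i=1$, and $\tilde{\mathcal U}_i^{n+1}=\mathcal U^{\mathrm{1st}}\le\mathcal U^{\mathrm{up}}$.
\item If $\mathcal U^{\mathrm{high}}>\mathcal U^{\mathrm{1st}}$ and the ratio is at least $1$, then $\theta_i=1$. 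In that case $\mathcal U^{\mathrm{high}}\le\mathcal U^{\mathrm{up}}$, and the inequality already holds.
\item Otherwise, $\theta_i=(\mathcal U^{\mathrm{up}}-\mathcal U^{\mathrm{1st}})/(\mathcal U^{\mathrm{high}}-\mathcal U^{\mathrm{1st}})\in[0,1)$. Here $\theta_i\ge0$ uses Theorem \ref{thm:ESlike1D}. The chord bound then gives $\tilde{\mathcal U}_i^{n+1}\le\mathcal U^{\mathrm{1st}}+(\mathcal U^{\mathrm{up}}-\mathcal U^{\mathrm{1st}})=\mathcal U^{\mathrm{up}}$.
\end{itemize}
In every case \eqref{eq:ESdiscrete} holds.

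For the corollary, I would sum \eqref{eq:ESdiscrete} over $i$. The numerical entropy fluxes $\hat{\mathcal F}_{i\pm1/2}$ telescope, assuming periodic or compactly supported data so that the boundary contributions vanish. This yields $\sum_i\tilde{\mathcal U}_i^{n+1}\le\sum_i\tilde{\mathcal U}_i^n$.

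The main obstacle is only to confirm that the MHD setting changes nothing here. The limiter never involves the source terms, so the Godunov--Powell structure enters solely through $\mathcal U^{\mathrm{1st}}\le\mathcal U^{\mathrm{up}}$, which Theorem \ref{thm:ESlike1D} supplies. A minor technical point is admissibility: the intermediate states must lie in the convex domain where $\mathcal U$ is defined, with $\rho>0$ and $p>0$. This holds because the set of admissible states is convex, so segments between $\bar{\mathbf U}_i^{n+1}$ and admissible point values stay inside it. I would assume admissible predicted point values, as in \cite{liu2026limiter}.
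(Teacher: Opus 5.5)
Your argument is correct and follows the route the paper relies on: the paper defers to \cite{liu2026limiter} and the scaling result of \cite{chen2017entropy}. Convexity of $\theta\mapsto\tilde{\mathcal U}_i^{n+1}$ gives the chord bound between $\mathcal U^{\mathrm{1st}}$ and $\mathcal U^{\mathrm{high}}$, and the weak inequality $\mathcal U^{\mathrm{1st}}\le\mathcal U^{\mathrm{up}}$ from Theorem \ref{thm:ESlike1D} ensures $\theta_i\in[0,1]$ and closes the case analysis. You are right to state the CFL hypothesis of Theorem \ref{thm:ESlike1D} explicitly, since the lemma leaves it implicit but it is genuinely needed: by Jensen, $\tilde{\mathcal U}_i^{n+1}\ge\mathcal U^{\mathrm{1st}}$ for every $\theta$.
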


    The limiting coefficient \eqref{eq:theta} is an explicit and robust choice. Moreover, it is also high-order in space. Using a similar analysis to that in \cite{liu2026limiter}, under certain assumptions, it can be proven that when approximating smooth solutions on the cell $I_i$, the limiter will preserve the desired accuracy. 
    
    \subsection{High-order time discretization} 
    
    The previous two subsections focus on  forward Euler time discretization, which is only first-order accurate in time. To obtain uniformly high-order accuracy, we can replace the forward Euler time discretization with an SSP multistep method \cite{gottlieb2001strong}. An $m$-step, $r$th-order SSP multistep method has the form
\begin{equation}\label{eq:MS}
\mathbf U_h^{n+1,(\mathrm{pre})}=\sum\limits_{l=1}^m\left(\alpha_l\mathbf U_h^{n+1-l}+\beta_l\Delta t\cdot\mathcal L_h(\mathbf U_h^{n+1-l})\right),
\end{equation}
where $\alpha_l\ge 0$ for $1\le l\le m$ and $\sum\limits_{l=1}^m\alpha_l=1$. If $\beta_l<0$, the operator $\mathcal L_h$ should be replaced by the downwind operator $\tilde{\mathcal L}_h$. Then, we use the limiter \eqref{eq:limiter} to get $\mathbf U_h^{n+1}$. It can be seen that the SSP multistep method is a convex combination of forward Euler steps. Hence, we can get the following results.

\begin{theorem}\label{thm:fullyES-multistep}
If each forward Euler step in \eqref{eq:MS} satisfies the  corresponding CFL condition in Theorem \ref{thm:ESlike1D}, then \eqref{eq:MS} satisfies the weak cell entropy inequality
$$
\mathcal U(\bar{\mathbf U}_i^{n+1})\le \sum\limits_{l=1}^m\left( \alpha_l\,\tilde{\mathcal U}_i^{n+1-l}-\beta_l\lambda(\hat{\mathcal F}_{i+\frac{1}{2}}^{n+1-l}-\hat{\mathcal F}_{i-\frac{1}{2}}^{n+1-l})  \right).
$$
\end{theorem}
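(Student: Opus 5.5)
The plan is to write the multistep update as a convex combination of forward Euler predictions and then apply convexity of $\mathcal U$ together with Theorem~\ref{thm:ESlike1D} for each piece.

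\textbf{Step 1: isolate the cell averages.} The limiter \eqref{eq:limiter} does not change cell averages, so $\bar{\mathbf U}_i^{n+1}$ is the cell average of the predicted state in \eqref{eq:MS}. Taking the cell average of \eqref{eq:MS} gives
\[
\bar{\mathbf U}_i^{n+1}=\sum_{l=1}^m\Bigl(\alpha_l\bar{\mathbf U}_i^{n+1-l}+\beta_l\Delta t\,\overline{\mathcal L_h(\mathbf U_h^{n+1-l})}_i\Bigr).
\]

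\textbf{Step 2: regroup into forward Euler steps.} Assume for the moment that every $\alpha_l>0$. Then
\[
\bar{\mathbf U}_i^{n+1}=\sum_{l=1}^m\alpha_l\,\bar{\mathbf W}_i^{(l)},\qquad
\bar{\mathbf W}_i^{(l)}=\bar{\mathbf U}_i^{n+1-l}+\frac{\beta_l}{\alpha_l}\Delta t\,\overline{\mathcal L_h(\mathbf U_h^{n+1-l})}_i .
\]
Each $\bar{\mathbf W}_i^{(l)}$ is the cell average of a forward Euler step \eqref{eq:EF1st} with mesh ratio $\lambda\beta_l/\alpha_l$, starting from $\mathbf U_h^{n+1-l}$.
\begin{itemize}
\item If $\beta_l\ge 0$, this is exactly the upwind step.
\item If $\beta_l<0$, the downwind operator $\tilde{\mathcal L}_h$ is used. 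This is the time-reversed problem, with the roles of the numerical fluxes mirrored, so the analogue of Theorem~\ref{thm:ESlike1D} applies with ratio $|\beta_l|\lambda/\alpha_l$.
\end{itemize}
The hypothesis that ``each forward Euler step satisfies the CFL condition'' is read as exactly this ratio condition.

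\textbf{Step 3: apply Theorem~\ref{thm:ESlike1D} to each piece.} This gives
\[
\mathcal U(\bar{\mathbf W}_i^{(l)})\le \tilde{\mathcal U}_i^{n+1-l}-\frac{\beta_l}{\alpha_l}\lambda\bigl(\hat{\mathcal F}_{i+\frac12}^{n+1-l}-\hat{\mathcal F}_{i-\frac12}^{n+1-l}\bigr).
\]
For the downwind case the sign of $\beta_l$ is carried through the telescoped entropy fluxes, which is consistent with the stated right-hand side.

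\textbf{Step 4: combine by convexity.} Jensen's inequality with weights $\alpha_l\ge0$, $\sum_l\alpha_l=1$, gives $\mathcal U(\bar{\mathbf U}_i^{n+1})\le\sum_l\alpha_l\,\mathcal U(\bar{\mathbf W}_i^{(l)})$. Multiplying each Step 3 bound by $\alpha_l$ and summing cancels the $\alpha_l$ in the denominators and yields the claimed inequality.

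\textbf{Degenerate case $\alpha_l=0$ with $\beta_l\ne0$.} I would exclude this, since standard SSP multistep schemes have finite SSP coefficient and hence $\beta_l=0$ whenever $\alpha_l=0$. Terms with $\alpha_l=\beta_l=0$ simply drop out.

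\textbf{Main obstacle.} The delicate point is the downwind case $\beta_l<0$. One must confirm that the downwind forward Euler DG step admits the same convex decomposition into three-point blocks. The natural route is to run the decomposition of Theorem~\ref{thm:ESlike1D}, with the generalized path $\Phi_i$ and the endpoint splitting, on the reflected problem $x\mapsto -x$, $t\mapsto -t$. Because $-\mathbf F$ and the corresponding negated source form a consistent system, the entropy flux enters with the sign of $\beta_l$, and the HLL wave speeds swap roles. Everything else is the routine convexity argument.
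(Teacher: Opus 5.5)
Your proposal is correct and follows the paper's argument. The paper simply observes that \eqref{eq:MS} is a convex combination, with weights $\alpha_l$, of forward Euler steps with ratios $\beta_l\lambda/\alpha_l$, and then applies Theorem~\ref{thm:ESlike1D} to each step together with the convexity of $\mathcal U$. Your treatment of the downwind case $\beta_l<0$ (where $\hat{\mathcal F}^{n+1-l}$ must be read as the entropy flux of the downwind operator) and of $\alpha_l=0$ spells out details the paper leaves implicit, but it does not change the route.
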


\begin{theorem}\label{thm:genuine-ES} For SSP multistep time discretization \eqref{eq:MS}, define
\begin{equation}\label{eq:thetaMS}
\mathcal U^{\mathrm{up}}=\sum\limits_{l=1}^m\left( \alpha_l\,\tilde{\mathcal U}_i^{n+1-l}-\beta_l\lambda(\hat{\mathcal F}_{i+\frac{1}{2}}^{n+1-l}-\hat{\mathcal F}_{i-\frac{1}{2}}^{n+1-l})  \right).
\end{equation}
Then under an appropriate CFL condition, the scheme \eqref{eq:MS} with limiter \eqref{eq:limiter} using \eqref{eq:thetaMS} satisfies the genuine cell entropy inequality in the sense of
\begin{equation}\label{eq:ESMS}
    \tilde{\mathcal U}_i^{n+1}\le \sum\limits_{l=1}^m\left( \alpha_l\,\tilde{\mathcal U}_i^{n+1-l}-\beta_l\lambda(\hat{\mathcal F}_{i+\frac{1}{2}}^{n+1-l}-\hat{\mathcal F}_{i-\frac{1}{2}}^{n+1-l})  \right),
\end{equation}
and is globally ES in the sense of 
$\sum\limits_i\tilde{\mathcal U}_i^{n+1}\le \sum\limits_{l=1}^{m}\alpha_l\left(\sum\limits_i \tilde{\mathcal U}_i^{n+1-l}\right)\le \max\limits_{1\le l\le m}\sum\limits_i \tilde{\mathcal U}_i^{n+1-l}$. 
\end{theorem}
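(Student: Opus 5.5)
The plan is to combine Theorem \ref{thm:fullyES-multistep} with the scaling-limiter lemma, in the same way as the forward Euler case. First, apply Theorem \ref{thm:fullyES-multistep}. Its CFL condition is the "appropriate CFL condition": each forward Euler substep $\mathbf U_h^{n+1-l}+(\beta_l/\alpha_l)\Delta t\,\mathcal L_h(\mathbf U_h^{n+1-l})$ is required to satisfy the restrictions of Theorem \ref{thm:ESlike1D} with $\lambda$ replaced by $\lambda|\beta_l|/\alpha_l$. When $\beta_l<0$, the downwind operator $\tilde{\mathcal L}_h$ is used, and we need the mirrored version of Theorem \ref{thm:ES1st}. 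This gives
\[
\mathcal U(\bar{\mathbf U}_i^{n+1})\le \mathcal U^{\mathrm{up}},
\]
with $\mathcal U^{\mathrm{up}}$ defined in \eqref{eq:thetaMS}. Writing the scheme as $\bar{\mathbf U}_i^{n+1}=\sum_l \alpha_l\bigl(\text{Euler step}_l\bigr)$, convexity of $\mathcal U$ reduces the bound to the individual weak inequalities \eqref{eq:ESlike}.

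Second, analyze the limiter \eqref{eq:limiter}. It preserves the cell average, because $\bar{\mathbf U}_i^{n+1}=\bar{\mathbf U}_i^{n+1,(\mathrm{pre})}$ and Gauss--Lobatto quadrature with $N\ge k+1$ is exact for the average. Define
\[
g(\theta)=\frac1h\int_{I_i}^{\langle N\rangle}\mathcal U\bigl(\bar{\mathbf U}_i^{n+1}+\theta(\mathbf U_h^{n+1,(\mathrm{pre})}-\bar{\mathbf U}_i^{n+1})\bigr)\,\mathrm dx .
\]
Since the quadrature weights are positive and $\mathcal U$ is convex, $g$ is convex on $[0,1]$. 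Moreover $g(0)=\mathcal U^{\mathrm{1st}}$ and $g(1)=\mathcal U^{\mathrm{high}}$, so $g(\theta)\le(1-\theta)\mathcal U^{\mathrm{1st}}+\theta\,\mathcal U^{\mathrm{high}}$.

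Now consider the cases. If $\mathcal U^{\mathrm{high}}\le\mathcal U^{\mathrm{up}}$, then $\theta_i=1$ and the inequality holds directly. Otherwise $\mathcal U^{\mathrm{high}}>\mathcal U^{\mathrm{up}}\ge\mathcal U^{\mathrm{1st}}$, so $\theta_i\in[0,1)$ is well defined and the convex-combination bound gives exactly $\tilde{\mathcal U}_i^{n+1}\le\mathcal U^{\mathrm{up}}$, which is \eqref{eq:ESMS}. The first step guarantees $\theta_i\ge0$.

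Finally, sum \eqref{eq:ESMS} over $i$. The fluxes telescope, assuming periodic or compactly supported data, and $\sum_l\alpha_l=1$ with $\alpha_l\ge0$ gives both global inequalities.

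The main obstacle is the first step when some $\beta_l<0$. There one must check that the downwind operator still admits the three-point decomposition of Theorem \ref{thm:ESlike1D}, with the entropy flux sign reversed consistently so that the term $-\beta_l\lambda(\hat{\mathcal F}_{i+1/2}-\hat{\mathcal F}_{i-1/2})$ appears. I would handle this by noting that $\tilde{\mathcal L}_h$ corresponds to a time-reversed problem. Theorem \ref{thm:ES1st} applied with negative step and reversed wave-speed roles yields the bound; alternatively, I would restrict to SSP multistep methods with all $\beta_l\ge0$.
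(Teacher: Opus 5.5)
Your proposal is correct and follows essentially the paper's route. Writing the SSP multistep update as a convex combination of forward Euler steps gives Theorem \ref{thm:fullyES-multistep}, i.e.\ $\mathcal U(\bar{\mathbf U}_i^{n+1})\le\mathcal U^{\mathrm{up}}$. The scaling-limiter lemma (your convexity argument for $g(\theta)$) with $\mathcal U^{\mathrm{up}}$ from \eqref{eq:thetaMS} then yields \eqref{eq:ESMS}, and summation with telescoping fluxes and $\sum_l\alpha_l=1$ gives the global bounds. Two small points: your first case ($\theta_i=1$) implicitly uses $\mathcal U^{\mathrm{high}}\ge\mathcal U^{\mathrm{1st}}$, which follows from Jensen's inequality with positive Gauss--Lobatto weights that are exact for the average; and your treatment of $\beta_l<0$ addresses a point the paper leaves implicit, while the method actually used in Section \ref{sec6} has all $\beta_l\ge 0$.
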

For smooth solutions, following an analysis similar to that in \cite{liu2026limiter}, we can show that the scaling limiter does not affect the spatial accuracy under appropriate assumptions.  Therefore, for $\mathbb P^k$ approximations, choosing a $(k+1)$th order multistep method, and using $N=k+1$ Gauss--Lobatto points is typically sufficient to preserve the optimal accuracy. For more details, see \cite{liu2026limiter}.

\subsection{Entropy stability of the solution limit}  

It is well-known that the Lax--Wendroff theorem implies the solution limit  of a conservative scheme, if it exists, is a weak solution of hyperbolic conservation laws. Despite the known convergence issue to weak solutions \cite{liu2018entropy, wu2018provably, sun2023numerical, liu2025globally}, we can still prove that its limit solution satisfies the entropy condition in the weak sense \eqref{eq:ESweak}. We first introduce several assumptions.

\begin{assumption}[Convergence]\label{assu:conv}
The numerical solutions take values in a fixed compact subset of the physical admissible set. Moreover, $\mathbf{U}_h^0 \to \mathbf{U}_0$ in $L_\mathrm{loc}^1(\mathbb{R})$ as $\Delta x\to 0$ and $\mathbf{U}_h \to \mathbf{U}^\star$ in $L_{\mathrm{loc}}^1(\mathbb{R}\times \mathbb{R}^+)$ as $\Delta x, \Delta t \to 0$.
\end{assumption}

\begin{assumption}[TVB-like property]\label{assu:tvd}
				The numerical solution satisfies
				\begin{equation*}
					\sup\limits_{n}\Delta x\sum\limits_{i} \max\limits_{x\in B_i}\left\|\mathbf U_h^n(x)-\mathbf U_h^n(x_i)\right\|\to 0\quad \mathrm{as}\ \ \Delta x,\Delta t\to 0, \quad B_i=I_{i-1}\cup I_i\cup I_{i+1}.
				\end{equation*}
			\end{assumption}

Then, under these assumptions, the following convergence theorem can be established. The proof is similar to that in \cite{liu2026limiter} and uses a Lax--Wendroff argument. We omit it here for brevity. We only point out here that the discrete cell entropy inequality \eqref{eq:ESdiscrete} or \eqref{eq:ESMS} is the key to completing the proof.

\begin{theorem}\label{thm:LW}
Suppose the numerical solution computed by \eqref{eq:EFDG} or \eqref{eq:MS}, with the ES limiter \eqref{eq:limiter}, satisfies Assumptions \ref{assu:conv} and \ref{assu:tvd}. For the multistep scheme \eqref{eq:MS}, assume additionally that the starting values are consistent. For any smooth test function $\phi\in C_0^\infty(\mathbb R\times\mathbb R^+)$ with $\phi \ge 0$, the limit solution $\mathbf U^\star$ satisfies the entropy inequality in the sense of distributions
					\begin{equation*}
						\int_{\mathbb R^+}\int_{\mathbb R} \left(\mathcal U(\mathbf U^\star)\phi_t+\mathcal F(\mathbf U^\star)\phi_x\right) \mathrm dx\mathrm dt\ge -\int_{\mathbb R}\mathcal U(\mathbf U_0(x))\phi(x,0)\,\mathrm dx.
					\end{equation*}
\end{theorem}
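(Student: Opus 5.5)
This is a Lax--Wendroff-type argument. The discrete cell entropy inequality \eqref{eq:ESdiscrete} (or \eqref{eq:ESMS}) is tested against the projected test function and summed by parts, and then we pass to the limit. I treat forward Euler \eqref{eq:EFDG} with the limiter first and explain the multistep changes afterwards. Fix $\phi\ge0$ in $C_0^\infty$ and set $\phi_i^n=\phi(x_i,t^n)\ge 0$. Multiply \eqref{eq:ESdiscrete} by $h\,\phi_i^n$ and sum over $i$ and $n$, which is legitimate since the inequality direction is preserved because $\phi_i^n\ge0$. Summation by parts in $n$ and in $i$ then gives
\[
\sum_n\sum_i h\Delta t\,\tilde{\mathcal U}_i^{n+1}\frac{\phi_i^{n+1}-\phi_i^{n}}{\Delta t}
+\sum_n\sum_i h\Delta t\,\hat{\mathcal F}^n_{i+\frac12}\frac{\phi_{i+1}^n-\phi_i^n}{h}
\ge -\sum_i h\,\tilde{\mathcal U}_i^{0}\phi_i^0 ,
\]
where boundary terms vanish by compact support.

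The remaining task is to identify the limits of the three pieces. For the initial term, note that $\tilde{\mathcal U}_i^0$ is a Gauss--Lobatto quadrature of $\mathcal U(\mathbf U_h^0)$. Assumption \ref{assu:tvd} lets us replace it by $\mathcal U(\mathbf U_h^0(x))$ pointwise up to an $o(1)$ error summed over cells: $\mathcal U$ is Lipschitz on the compact set of Assumption \ref{assu:conv} and the quadrature weights sum to one. Then $L^1_{\mathrm{loc}}$ convergence of $\mathbf U_h^0$ gives $\int\mathcal U(\mathbf U_0)\phi(x,0)\,\mathrm dx$. The time-derivative term is handled in the same way, using convergence of $\mathbf U_h\to\mathbf U^\star$ and smoothness of $\phi$, and yields $\iint\mathcal U(\mathbf U^\star)\phi_t$.

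The flux term is the main point. We need $|\hat{\mathcal F}^n_{i+1/2}-\mathcal F(\mathbf U_h^n(x))|\le C\max_{B_i}\|\mathbf U_h^n-\mathbf U_h^n(x_i)\|$ for $x\in I_i$. This requires the HLL entropy flux \eqref{eq:entropyHLL} to be consistent, $\hat{\mathcal F}(\mathbf U,\mathbf U)=\mathcal F(\mathbf U)$, and Lipschitz on the compact set with uniformly bounded wave speeds. So the step to check is that the ES wave speeds $\mathcal S_R,\mathcal S_L$ stay uniformly bounded. Remark \ref{remark:path} and Remark \ref{rmk:decomposition} give exactly this $\mathcal O(1)$ bound on the admissible compact set. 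Because the factors $\mathcal S_R,\mathcal S_L$ are bounded and $\mathcal S_R-\mathcal S_L$ is bounded below, for example by the standard speeds or a positive floor, the Lipschitz bound follows. Assumption \ref{assu:tvd} then sends the error to zero, and the remaining term converges to $\iint\mathcal F(\mathbf U^\star)\phi_x$.

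For the multistep scheme \eqref{eq:MS}, multiply \eqref{eq:ESMS} by $h\phi_i^n$ and sum in the same way. The consistency conditions $\sum_l\alpha_l=1$ and $\sum_l(\beta_l-(l-1)\alpha_l)=1$ from the first-order accuracy of the method make the shifted sums reproduce $\phi_t$ and $\phi_x$ up to $o(1)$. The flux terms involving the downwind operator $\tilde{\mathcal L}_h$ carry consistent fluxes with the sign absorbed by $\beta_l<0$. The consistent starting values supply the initial term for the first $m$ levels. Collecting the limits yields the claimed inequality. The main obstacle is the uniform Lipschitz/consistency bound on $\hat{\mathcal F}$ with the ES-modified wave speeds; everything else is the standard argument of \cite{liu2026limiter}.
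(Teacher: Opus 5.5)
Your proposal is correct and follows the same route as the paper. The paper omits the details and only invokes the Lax--Wendroff argument of its earlier work, with the discrete cell entropy inequality \eqref{eq:ESdiscrete} (or \eqref{eq:ESMS}) as the key ingredient; you carry this out via summation by parts, consistency and the Lipschitz-type bound of the HLL entropy flux, and Assumption \ref{assu:tvd}. You are also right that bounded wave speeds is the point needing care, though only the real-interface speeds built from the prescribed path $\Psi$ matter (Remark \ref{remark:path}), since the artificial interior entropy fluxes telescope and never enter \eqref{eq:ESdiscrete}.
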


\section{Extension to multi-dimensions} \label{sec5}
The 1D framework can be extended to multiple dimensions direction by direction. Here we consider the 2D case as an example:
\begin{equation*}
\frac{\partial\mathbf U}{\partial t}+\frac{\partial \mathbf F_1(\mathbf U)}{\partial x}+\frac{\partial \mathbf F_2(\mathbf U)}{\partial y}=-\mathbf S(\mathbf U)(\nabla\cdot\mathbf B).
\end{equation*}

\subsection{Semi-discrete DG scheme}
Assume a uniform partition of the spatial domain $\mathbb R^2$ into meshes $\mathcal K = \{K_{ij}\}$, where $K_{ij}=[x_{i-1/2},x_{i+1/2}]\times[y_{j-1/2},y_{j+1/2}]$ with grid sizes $h_x$ and $h_y$. For the 2D case, the finite element space is defined as 
$$V_h^k=\{w(x,y):w(x,y)|_{K_{ij}}\in \mathbb P^k(K_{ij}),\,\forall K_{ij}\in\mathcal K\}.$$
Let $\mathbf V_h^k=[V_h^k]^p$. Then, the semi-discrete DG scheme reads: Find $\mathbf U_h\in\mathbf V_{h}^k$, such that for any $\mathbf W\in\mathbf V_{h}^k$ and $K_{ij}\in \mathcal K$,
\begin{align}
\label{eq:semiDG2D}
\int_{K_{ij}}&{\frac{\partial \mathbf{U}_h}{\partial t}\cdot \mathbf{W}\mathrm{d}x\mathrm{d}y}=\int_{K_{ij}}{\mathbf{F}_1\left( \mathbf{U}_h \right) \cdot \frac{\partial \mathbf{W}}{\partial x}+\mathbf{F}_2\left( \mathbf{U}_h \right) \cdot \frac{\partial \mathbf{W}}{\partial y}\mathrm{d}x\mathrm{d}y}
\\
&-\int_{y_{j-\frac{1}{2}}}^{y_{j+\frac{1}{2}}}{\left( \hat{\mathbf{F}}_{1,i+\frac{1}{2}}\left( y \right) \cdot \mathbf{W}_{i+\frac{1}{2}}^{-}\left( y \right) -\hat{\mathbf{F}}_{1,i-\frac{1}{2}}\left( y \right) \cdot \mathbf{W}_{i-\frac{1}{2}}^{+}\left( y \right) \right) \mathrm{d}y} \notag
\\
&-\int_{x_{i-\frac{1}{2}}}^{x_{i+\frac{1}{2}}}{\left( \hat{\mathbf{F}}_{2,j+\frac{1}{2}}\left( x \right) \cdot \mathbf{W}_{j+\frac{1}{2}}^{-}\left( x \right) -\hat{\mathbf{F}}_{2,j-\frac{1}{2}}\left( x \right) \cdot \mathbf{W}_{j-\frac{1}{2}}^{+}\left( x \right) \right) \mathrm{d}x} \notag
\\
&-\int_{y_{j-\frac{1}{2}}}^{y_{j+\frac{1}{2}}}\left(\mathcal D_{1,i+\frac{1}{2}}^-(y)\cdot\mathbf W_{i+\frac{1}{2}}^-(y)+\mathcal D_{1,i-\frac{1}{2}}^+(y)\cdot\mathbf W_{i-\frac{1}{2}}^+(y)\right)\mathrm dy \notag
\\&-\int_{x_{i-\frac{1}{2}}}^{x_{i+\frac{1}{2}}}{\left( \mathcal D^-_{2,j+\frac{1}{2}}\left( x \right) \cdot \mathbf{W}_{j+\frac{1}{2}}^{-}\left( x \right) +\mathcal D_{2,j-\frac{1}{2}}^+\left( x \right) \cdot \mathbf{W}_{j-\frac{1}{2}}^{+}\left( x \right) \right) \mathrm{d}x} \notag
\\&-\int_{K_{ij}}\mathbf S(\mathbf U_h)\cdot\mathbf W(\nabla\cdot\mathbf B_h)\mathrm dx\mathrm dy. \notag
\end{align}
Here, 
\begin{align*}
\hat{\mathbf F}_{1,i+\frac{1}{2}}(y)&=\hat{\mathbf F}_1(\mathbf U_h^n(x_{i+\frac{1}{2}}^-,y),\mathbf U_h^n(x_{i+\frac{1}{2}}^+,y)),
\\ \mathcal D^\pm_{1,i+\frac{1}{2}}(y)&=\mathcal R^{x,\pm}_{i+\frac{1}{2}}(y)\int_0^1 \mathbf S(\Psi_{i+\frac{1}{2}}^y(\xi))B_x'\frac{\partial\Psi_{i+\frac{1}{2}}^y}{\partial\xi}(\xi)\mathrm d\xi,\\ \mathcal R_{i+\frac{1}{2}}^{x,\pm}(y)&=\mathcal R^{x,\pm}(\mathbf U_h^n(x_{i+\frac{1}{2}}^-,y),\mathbf U_h^n(x_{i+\frac{1}{2}}^+,y)),
\\ 
\Psi_{i+\frac{1}{2}}^y(\xi)&=\Psi(\xi;\mathbf U_h^n(x_{i+\frac{1}{2}}^-,y),\mathbf U_h^n(x_{i+\frac{1}{2}}^+,y)).
\end{align*}
The operator $\mathcal R^{x,\pm}$ represents \eqref{eq:R} in the $x$-direction. The notations in the $y$-direction are defined similarly. For \eqref{eq:semiDG2D}, the forward Euler fully-discrete DG scheme is also given by \eqref{eq:EFDG}.

\subsection{Fully-discrete ES scheme}
The cell average is updated with
\begin{equation}\label{eq:EFave2D}
\begin{aligned}\bar{\mathbf U}_{ij}^{n+1}&=\,\bar{\mathbf U}_{ij}^n-\frac{\lambda_x}{h_y}\int_{y_{j-\frac{1}{2}}}^{y_{j+\frac{1}{2}}}(\hat{\mathbf F}_{1,i+\frac{1}{2}}-\hat{\mathbf F}_{1,i-\frac{1}{2}})\mathrm dy-\frac{\lambda_y}{h_x}\int_{x_{i-\frac{1}{2}}}^{x_{i+\frac{1}{2}}}(\hat{\mathbf F}_{2,j+\frac{1}{2}}-\hat{\mathbf F}_{2,j-\frac{1}{2}})\mathrm dx 
\\
&-\frac{\lambda_x}{h_y}\int_{y_{j-\frac{1}{2}}}^{y_{j+\frac{1}{2}}}(\mathcal D_{1,i+\frac{1}{2}}^-+\mathcal D_{1,i-\frac{1}{2}}^+)\mathrm dy
-\frac{\lambda_y}{h_x}\int_{x_{i-\frac{1}{2}}}^{x_{i+\frac{1}{2}}}(\mathcal D_{2,j+\frac{1}{2}}^-+\mathcal D_{2,j-\frac{1}{2}}^+)\mathrm dx 
\\&-\frac{\Delta t}{h_xh_y}\int_{K_{ij}}\mathbf S(\mathbf U_h^n)(\nabla\cdot\mathbf B_h^n)\mathrm dx\mathrm dy, 
\end{aligned}
\end{equation}
where $\lambda_x = \Delta t/h_x$ and $\lambda_y = \Delta t/h_y$. For the 2D case, define
\begin{equation}\label{eq:Uup2D}
\mathcal U^{\mathrm{up}}=\,\tilde{\mathcal U}_{ij}^{n}-\frac{\lambda_x}{h_y}\int_{y_{j-\frac{1}{2}}}^{y_{j+\frac{1}{2}}}(\hat{\mathcal F}_{1,i+\frac{1}{2}}-\hat{\mathcal F}_{1,i-\frac{1}{2}})\mathrm dy
-\frac{\lambda_y}{h_x}\int_{x_{i-\frac{1}{2}}}^{x_{i+\frac{1}{2}}}(\hat{\mathcal F}_{2,j+\frac{1}{2}}-\hat{\mathcal F}_{2,j-\frac{1}{2}})\mathrm dx
\end{equation}
with the discrete cell entropy average
$$ \tilde{\mathcal U}_{ij}^n=\sum\limits_{q=1}^N \left\{\frac{\omega_q}{2h_y}\int_{y_{j-\frac{1}{2}}}^{y_{j+\frac{1}{2}}}\mathcal U(\mathbf U_h^n(\hat x_q,y))\mathrm dy+\frac{\omega_q}{2h_x}\int_{x_{i-\frac{1}{2}}}^{x_{i+\frac{1}{2}}}\mathcal U(\mathbf U_h^n(x,\hat y_q))\right\}\mathrm dx. $$
The entropy-stability result is stated as follows. The proof uses a technique similar to that in the 1D case, and we omit it here for brevity.

\begin{theorem}\label{thm:ESlike2D}
The scheme \eqref{eq:EFave2D} satisfies the following weak entropy inequality
\begin{equation*}
\mathcal U(\bar{\mathbf U}_{ij}^{n+1})\le \mathcal U^{\mathrm{up}}
\end{equation*}
under the CFL condition
\begin{align*}
\lambda_x&\le \frac{\omega_1}{2}\min\limits_{y\in[y_{j-\frac{1}{2}},y_{j+\frac{1}{2}}]}\mathscr T_x(\mathbf U_h^n(x_{i-\frac{1}{2}}^-,y),\mathbf U_h^n(x_{i-\frac{1}{2}}^+,y),\mathbf U_h^n(x_{i+\frac{1}{2}}^-,y);\Psi_{i-\frac{1}{2}}^y,\Phi_i^y), 
\\ \lambda_x&\le \frac{\omega_N}{2}\min\limits_{y\in[y_{j-\frac{1}{2}},y_{j+\frac{1}{2}}]}\mathscr T_x(\mathbf U_h^n(x_{i-\frac{1}{2}}^+,y),\mathbf U_h^n(x_{i+\frac{1}{2}}^-,y),\mathbf U_h^n(x_{i+\frac{1}{2}}^+,y);\Phi_i^y,\Psi_{i+\frac{1}{2}}^y),    
\\ \lambda_y&\le \frac{\omega_1}{2}\min\limits_{x\in[x_{i-\frac{1}{2}},x_{i+\frac{1}{2}}]}\mathscr T_y(\mathbf U_h^n(x,y_{j-\frac{1}{2}}^-),\mathbf U_h^n(x,y_{j-\frac{1}{2}}^+),\mathbf U_h^n(x,y_{j+\frac{1}{2}}^-);\Psi_{j-\frac{1}{2}}^x,\Phi_{j}^x),
\\ \lambda_y&\le \frac{\omega_N}{2}\min\limits_{x\in[x_{i-\frac{1}{2}},x_{i+\frac{1}{2}}]}\mathscr T_y(\mathbf U_h^n(x,y_{j-\frac{1}{2}}^+),\mathbf U_h^n(x,y_{j+\frac{1}{2}}^-),\mathbf U_h^n(x,y_{j+\frac{1}{2}}^+);\Phi_j^x,\Psi_{j+\frac{1}{2}}^x).
\end{align*}
Here, $\mathscr T_x$ and $\mathscr T_y$ denote the CFL condition \eqref{eq:CFL1st} in the corresponding direction. Moreover, the generalized paths $\Phi_i^y$ and $\Phi_j^x$ connect
$\mathbf U_h^n(x_{i-1/2}^+,y)$ to $\mathbf U_h^n(x_{i+1/2}^-,y)$ and
$\mathbf U_h^n(x,y_{j-1/2}^+)$ to $\mathbf U_h^n(x,y_{j+1/2}^-)$, respectively,
and are constructed from $\mathbf U_h^n$ itself:
\begin{align*}\Phi_i^y(\xi)&=\mathbf U_h^n|_{K_{ij}}(x_{i-\frac{1}{2}}+\xi(x_{i+\frac{1}{2}}-x_{i-\frac{1}{2}}),y),
\\   
\Phi_j^x(\xi)&=\mathbf U_h^n|_{K_{ij}}(x,y_{j-\frac{1}{2}}+\xi(y_{j+\frac{1}{2}}-y_{j-\frac{1}{2}})).
\end{align*}
If directional endpoint states coincide, the corresponding CFL condition is understood using the decomposition in Theorem \ref{thm:ESlike1D}.
\end{theorem}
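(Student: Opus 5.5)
The plan is to reduce the 2D update \eqref{eq:EFave2D} to a convex combination of 1D three-point schemes along lines, and then invoke Theorem~\ref{thm:ES1st} line by line, as in the proof of Theorem~\ref{thm:ESlike1D}.

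\emph{Step 1: split the cell average.} First write the cell average in two equivalent ways using the Gauss--Lobatto rule in one direction and exact integration in the other. Since $N\ge k+1$, the rule is exact for $\mathbb P^k$ in $x$ for each fixed $y$, so
\[
\bar{\mathbf U}^n_{ij}=\frac{1}{2h_y}\int_{y_{j-\frac12}}^{y_{j+\frac12}}\sum_{q=1}^N\omega_q\mathbf U_h^n(\hat x_q,y)\,\mathrm dy+\frac{1}{2h_x}\int_{x_{i-\frac12}}^{x_{i+\frac12}}\sum_{q=1}^N\omega_q\mathbf U_h^n(x,\hat y_q)\,\mathrm dx .
\]

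\emph{Step 2: split the volume source.} Write $\nabla\cdot\mathbf B_h=\partial_xB_{x,h}+\partial_yB_{y,h}$. For fixed $y$, the $x$-part $\int\mathbf S(\mathbf U_h)\partial_xB_{x,h}\,\mathrm dx$ equals $\int_0^1\mathbf S(\Phi_i^y(\xi))B_x'\,\Phi_i^{y\prime}(\xi)\,\mathrm d\xi$ by the change of variables $x=x_{i-1/2}+\xi h_x$. The $y$-part is handled analogously with $\Phi_j^x$ and $B_y'$. Then split each part with $\mathcal R^{x,\pm}_i(y)$, built from the states $\mathbf U_h^n(x_{i-\frac12}^+,y)$ and $\mathbf U_h^n(x_{i+\frac12}^-,y)$, into $\mathcal A^{x,\pm}_i(y)$ exactly as in case (1) of Theorem~\ref{thm:ESlike1D}. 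Introduce likewise the artificial interior flux $\hat{\mathbf F}_{1,i}(y)=\hat{\mathbf F}_1(\mathbf U_h^n(x_{i-\frac12}^+,y),\mathbf U_h^n(x_{i+\frac12}^-,y))$.

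\emph{Step 3: assemble the convex combination.} Regroup the update as
\[
\bar{\mathbf U}^{n+1}_{ij}=\frac{1}{2h_y}\int\Bigl[\tfrac{\omega_1}{1}\mathcal H^x_1(y)+\omega_N\mathcal H^x_N(y)+\sum_{q=2}^{N-1}\omega_q\mathbf U_h^n(\hat x_q,y)\Bigr]\mathrm dy+(\text{analogous }y\text{-terms}),
\]
where
\[
\mathcal H^x_1(y)=\mathcal H\bigl(\mathbf U_h^n(x_{i-\frac12}^-,y),\mathbf U_h^n(x_{i-\frac12}^+,y),\mathbf U_h^n(x_{i+\frac12}^-,y);\Psi^y_{i-\frac12},\Phi_i^y;\tfrac{2\lambda_x}{\omega_1}\bigr),
\]
and similarly for $\mathcal H^x_N$. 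The factor $2$ arises because the $x$-direction flux and source terms carry the full weight $\lambda_x$, while the $x$-part of the average carries only weight $\tfrac12$. This is why the CFL conditions read $\lambda_x\le\frac{\omega_1}{2}\mathscr T_x$ and $\lambda_x\le\frac{\omega_N}{2}\mathscr T_x$. The total weights sum to one, giving a convex combination, interpreted as an integral average (a probability measure over $y$, $x$ and $q$).

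\emph{Step 4: apply convexity and Theorem~\ref{thm:ES1st}.} Jensen's inequality for integral averages gives $\mathcal U(\bar{\mathbf U}^{n+1}_{ij})\le$ the average of $\mathcal U$ of the pieces. Theorem~\ref{thm:ES1st} applied pointwise in $y$ (and in $x$) under the stated CFL conditions bounds each $\mathcal U(\mathcal H)$. The artificial entropy fluxes $\hat{\mathcal F}_{1,i}(y)$ then telescope, and the remaining terms $\omega_q\mathcal U(\mathbf U_h^n(\hat x_q,y))$ reassemble into $\tilde{\mathcal U}^n_{ij}$. The result is exactly $\mathcal U^{\mathrm{up}}$ in \eqref{eq:Uup2D}.

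\emph{Main obstacle.} I expect the delicate step to be the degenerate lines where $\mathbf U_h^n(x_{i-\frac12}^+,y)=\mathbf U_h^n(x_{i+\frac12}^-,y)$. On those lines I would use the decomposition of case (2a)/(2b) of Theorem~\ref{thm:ESlike1D}, choosing $s$ depending on $y$. I also need to check measurability and uniformity of these choices; if necessary I would argue by approximation or with a.e.\ statements, since the set of degenerate $y$ is either finite or all of $[y_{j-\frac12},y_{j+\frac12}]$, because the jump is polynomial in $y$.
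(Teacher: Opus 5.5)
Your proposal is correct and follows the route the paper intends; the paper omits the 2D proof, saying only that it mirrors case (1) of Theorem \ref{thm:ESlike1D}. The steps you fill in are the ones that argument requires. You split the cell average into halves via Gauss--Lobatto in one direction and exact integration in the other. You read each directional part of the volume source as a generalized path integral along $\Phi_i^y$ or $\Phi_j^x$. You build line-wise three-point schemes with steps $2\lambda_x/\omega_1$ and $2\lambda_x/\omega_N$, which is exactly where the factor $\frac{1}{2}$ in the CFL conditions comes from. You then apply Jensen's inequality and telescope the artificial interior entropy fluxes. Your observation that exactly degenerate lines form a finite, hence null, set of $y$ unless the jump vanishes identically is a correct and useful sharpening of the paper's clause that such cases are ``understood using the decomposition.''
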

 
\begin{remark}
    For notational simplicity, Theorem \ref{thm:ESlike2D} is stated using the unsplit
directional generalized paths. For each fixed transverse coordinate,
however, $\Phi_i^y$ and $\Phi_j^x$ are one-dimensional polynomial
generalized paths of degree at most $k$. Therefore, if their endpoint
states are coincident or arbitrarily close, the decomposition described
in Section~4 can be applied independently to each directional path.
For every fixed $k$, the resulting entropy-stability wave-speed bounds
remain uniformly $\mathcal O(1)$ under the same boundedness assumptions
as in Remark \ref{rmk:decomposition}. We omit the expanded
multidimensional CFL conditions to avoid cumbersome notation.
\end{remark} 

The ES limiter can be applied in the same way as in \eqref{eq:limiter} and \eqref{eq:theta}. As a result, the fully-discrete ES property can be established as follows.

\begin{theorem}
The scheme \eqref{eq:semiDG2D} with forward Euler time discretization \eqref{eq:EFDG} and ES limiter \eqref{eq:limiter} using $\mathcal U^{\mathrm{up}}$ in \eqref{eq:Uup2D} 
satisfies the discrete cell entropy inequality
\begin{align*}
\tilde{\mathcal U}^{n+1}_{ij}\le &\,\,\tilde{\mathcal U}_{ij}^{n}-\frac{\lambda_x}{h_y}\int_{y_{j-\frac{1}{2}}}^{y_{j+\frac{1}{2}}}(\hat{\mathcal F}_{1,i+\frac{1}{2}}-\hat{\mathcal F}_{1,i-\frac{1}{2}})\mathrm dy-\frac{\lambda_y}{h_x}\int_{x_{i-\frac{1}{2}}}^{x_{i+\frac{1}{2}}}(\hat{\mathcal F}_{2,j+\frac{1}{2}}-\hat{\mathcal F}_{2,j-\frac{1}{2}})\mathrm dx,
\end{align*}
and is globally ES in the sense of 
$\sum\limits_{i,j}\tilde{\mathcal U}_{ij}^{n+1}\le \sum\limits_{i,j}\tilde{\mathcal U}_{ij}^n. $
\end{theorem}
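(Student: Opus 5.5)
The plan is to reduce the statement to Theorem \ref{thm:ESlike2D} and then use the scaling argument of the 1D lemma unchanged.

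\textbf{Step 1: the weak inequality and the fixed average.} Under the CFL conditions of Theorem \ref{thm:ESlike2D}, the forward Euler update \eqref{eq:EFave2D} of the cell average satisfies $\mathcal U(\bar{\mathbf U}_{ij}^{n+1})\le\mathcal U^{\mathrm{up}}$, with $\mathcal U^{\mathrm{up}}$ given by \eqref{eq:Uup2D}. The limiter \eqref{eq:limiter} scales about the cell average, so it leaves $\bar{\mathbf U}_{ij}^{n+1}$ unchanged. Set $\mathcal U^{\mathrm{1st}}=\mathcal U(\bar{\mathbf U}_{ij}^{n+1})$ and $\mathcal U^{\mathrm{high}}=\tilde{\mathcal U}_{ij}^{n+1,(\mathrm{pre})}$.

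\textbf{Step 2: convexity of the limited entropy average in $\theta$.} Let $\mathbf U_\theta=\bar{\mathbf U}_{ij}^{n+1}+\theta(\mathbf U_h^{n+1,(\mathrm{pre})}-\bar{\mathbf U}_{ij}^{n+1})$ and $g(\theta)=\tilde{\mathcal U}_{ij}(\mathbf U_\theta)$. The 2D discrete entropy average is a positive combination of point evaluations. In each direction it is a Gauss--Lobatto rule in one variable and an exact integral in the other, with total weight $\tfrac12+\tfrac12=1$. Hence it is a nonnegative normalized linear functional applied to $\mathcal U(\mathbf U_\theta(\cdot))$. For every point, $\theta\mapsto\mathcal U(\mathbf U_\theta)$ is convex, so $g$ is convex. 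At $\theta=0$ every evaluation equals $\bar{\mathbf U}_{ij}^{n+1}$, so $g(0)=\mathcal U^{\mathrm{1st}}$. Also $g(1)=\mathcal U^{\mathrm{high}}$.

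\textbf{Step 3: choice of $\theta_{ij}$.} Convexity on $[0,1]$ gives
\[
g(\theta)\le(1-\theta)\,\mathcal U^{\mathrm{1st}}+\theta\,\mathcal U^{\mathrm{high}}.
\]
Choose $\theta_{ij}$ by \eqref{eq:theta}. Consider three cases:
\begin{itemize}
\item If $\mathcal U^{\mathrm{high}}\le\mathcal U^{\mathrm{up}}$, then $\theta=1$ and the bound holds directly.
\item If $\mathcal U^{\mathrm{high}}>\mathcal U^{\mathrm{up}}\ge\mathcal U^{\mathrm{1st}}$, then $\theta\in[0,1)$ and the convex combination equals $\mathcal U^{\mathrm{up}}$ exactly.
\item If $\mathcal U^{\mathrm{high}}=\mathcal U^{\mathrm{1st}}$, then $g(1)=\mathcal U^{\mathrm{1st}}\le\mathcal U^{\mathrm{up}}$.
\end{itemize}
In every case $\tilde{\mathcal U}_{ij}^{n+1}=g(\theta_{ij})\le\mathcal U^{\mathrm{up}}$, which is the claimed cell entropy inequality.

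\textbf{Step 4: global stability.} Sum the cell inequality over $i,j$. Each directional interface entropy flux $\hat{\mathcal F}_{1,i+1/2}(y)$ appears once with sign $+$ from cell $(i,j)$ and once with sign $-$ from cell $(i+1,j)$, with the same weight $\lambda_x/h_y$ and the same $y$-integral. The $y$-fluxes cancel in the same way. Assuming compact support or periodicity, all flux terms telescope and $\sum_{i,j}\tilde{\mathcal U}_{ij}^{n+1}\le\sum_{i,j}\tilde{\mathcal U}_{ij}^n$.

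\textbf{Main obstacle.} The only delicate point is ensuring $\theta_{ij}\in[0,1]$, which needs $\mathcal U^{\mathrm{up}}\ge\mathcal U^{\mathrm{1st}}$. This is exactly Theorem \ref{thm:ESlike2D}, and it is why its CFL condition must be imposed. A smaller check is that the mixed 2D entropy average really is a convex combination of point values, which is what justifies the convexity in Step 2.
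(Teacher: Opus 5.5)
\textbf{Gap in Step 3.} Your route is the one the paper intends: Theorem \ref{thm:ESlike2D} gives $\mathcal U(\bar{\mathbf U}_{ij}^{n+1})\le\mathcal U^{\mathrm{up}}$, then comes the scaling limiter with \eqref{eq:theta}, then telescoping. But your case analysis in Step 3 has a hole. The claim ``if $\mathcal U^{\mathrm{high}}\le\mathcal U^{\mathrm{up}}$ then $\theta=1$'' holds only when $\mathcal U^{\mathrm{high}}\ge\mathcal U^{\mathrm{1st}}$. Suppose instead $\mathcal U^{\mathrm{high}}<\mathcal U^{\mathrm{1st}}<\mathcal U^{\mathrm{up}}$. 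Then the ratio in \eqref{eq:theta} is negative, so $\theta_{ij}<0$. Your convexity bound $g(\theta)\le(1-\theta)g(0)+\theta g(1)$ says nothing at a negative $\theta$; there convexity bounds $g$ from below, not from above. So $\theta_{ij}\in[0,1]$ needs two facts, not the one you list in your ``main obstacle'':
\begin{itemize}
\item $\mathcal U^{\mathrm{up}}\ge\mathcal U^{\mathrm{1st}}$, which is Theorem \ref{thm:ESlike2D};
\item $\mathcal U^{\mathrm{high}}\ge\mathcal U^{\mathrm{1st}}$, which convexity of $g$ and $g(0)=\mathcal U^{\mathrm{1st}}$ do not give.
\end{itemize}

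\textbf{The missing ingredient.} It is the result of \cite{chen2017entropy} that the paper invokes: scaling toward the cell average does not increase the discrete cell entropy. This uses exactness of the quadrature, not just positivity of the weights. Write $\tilde{\mathcal U}_{ij}=\ell(\mathcal U(\cdot))$, where $\ell$ is the normalized positive functional from your Step 2. Along each line $y=\mathrm{const}$ or $x=\mathrm{const}$, $\mathbf U_h^{n+1,(\mathrm{pre})}|_{K_{ij}}\in[\mathbb P^k]^p$ is a polynomial of degree at most $k$. The $N$-point Gauss--Lobatto rule with $N\ge k+1$ is exact at that degree, and the transverse integral is exact, so $\ell(\mathbf U_h^{n+1,(\mathrm{pre})})=\bar{\mathbf U}_{ij}^{n+1}$. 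Jensen's inequality then gives
\[
\mathcal U^{\mathrm{1st}}=\mathcal U\bigl(\ell(\mathbf U_h^{n+1,(\mathrm{pre})})\bigr)\le\ell\bigl(\mathcal U(\mathbf U_h^{n+1,(\mathrm{pre})})\bigr)=\mathcal U^{\mathrm{high}}.
\]
Equivalently, $g'(0)=\mathcal U'(\bar{\mathbf U}_{ij}^{n+1})\bigl(\ell(\mathbf U_h^{n+1,(\mathrm{pre})})-\bar{\mathbf U}_{ij}^{n+1}\bigr)=0$, so the convex $g$ is nondecreasing on $[0,1]$. With this added, your three cases are exhaustive, and Steps 3 and 4 go through as written, under the CFL conditions of Theorem \ref{thm:ESlike2D} and periodic or compactly supported data for the telescoping.
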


The extension to high-order temporal accuracy with an SSP multistep method is also similar to the 1D case, and we thus omit it for brevity.

\subsection{Locally divergence-free methods}\label{sec:LDF}  
Although the scheme \eqref{eq:semiDG2D} with the limiter \eqref{eq:limiter} is entropy stable, we still need to control the divergence of the magnetic field. To address this issue, four main approaches have been proposed in the literature: 8-wave formulation \cite{powell1999solution}, divergence-cleaning methods \cite{rueda2023entropy, chen2025new}, projection methods \cite{toth2000b}, and constrained transport (CT) techniques \cite{balsara2004second, christlieb2014finite}. Within the framework of DG methods, the LDF approach, introduced by Cockburn, Li, and Shu \cite{cockburn2004locally, li2005locally}, serves as a natural candidate. The LDF method employs a specialized vector finite element space for numerical magnetic field $\mathbf B_h$, ensuring that it is strictly divergence-free within each cell.  Specifically, the finite element space is defined as 
\begin{equation}\label{eq:LDFspace}
\mathbf V_{h,0}^k=\left\{ \mathbf W\in\mathbf V_h^k:\nabla\cdot\mathbf B(\mathbf W)=0,\ \forall K_{ij}\in\mathcal K \right\}.
\end{equation}
A detailed construction of the LDF space with $d=2,3$ can be found in \cite{cockburn2004locally}. By utilizing the LDF space, the semi-discrete scheme reads: Find $\mathbf U_h\in \mathbf V_{h,0}^k$, such that for any $\mathbf W\in \mathbf V_{h,0}^k$ and $K_{ij}\in \mathcal K$, \eqref{eq:semiDG2D} holds. Note that since $\nabla\cdot\mathbf B_h=0$ everywhere in $K_{ij}$, the last interior integral term in \eqref{eq:semiDG2D} vanishes.  Fortunately, since a piecewise constant polynomial is naturally LDF, it follows that the cell average still satisfies \eqref{eq:EFave2D}. According to the above analysis, Theorem \ref{thm:ESlike2D} also holds for the forward Euler DG scheme with LDF space. Then, the limiter \eqref{eq:limiter} can be applied to preserve the genuine cell entropy inequality analogously. Moreover, the application of this limiter does not affect the divergence-free property of the magnetic field.

\section{Numerical tests}\label{sec6}

In this section, we present several numerical examples to demonstrate the performance of the proposed scheme. We use a $\mathbb P^2$ approximation and a 6-step, third-order SSP multistep method
$$
\mathbf U_h^{n+1}=\frac{108}{125}\mathbf U_h^n+ \frac{36}{25}\Delta t\cdot\mathcal L_h(\mathbf U_h^n)+\frac{17}{125}\mathbf U_h^{n-5}+\frac{6}{25}\Delta t\cdot\mathcal L_h(\mathbf U_h^{n-5})
$$
with $\mathrm{CFL}=0.06$ for all simulations. To demonstrate the advantage of the fully-discrete entropy stability, we do not apply any other slope limiter (e.g. the TVB limiter \cite{cockburn1989tvb3}, WENO limiter \cite{qiu2005runge}, COS limiter \cite{cao2026cos}) to the solution for all tests. However, for extreme examples, such as strong shocks or low density/pressure regions, a positivity-preserving (PP) limiter \cite{wu2018provably} is still needed to maintain the physical admissibility of the numerical solutions. As mentioned in \cite{chen2017entropy, liu2026limiter}, it will not increase the cell entropy.  To verify the entropy stability of the scheme, we will compute the maximum cell entropy violation of the solution for some examples, defined as $\max_{i,j}(\tilde{\mathcal U}_{ij}-\mathcal U^{\mathrm{up}}|_{K_{ij}})$. Obviously, this quantity should be non-positive for a fully-discrete ES scheme. Moreover, to measure the global divergence, we also focus on the divergence norm introduced in \cite{cockburn2004locally}, which is defined as
$$
\left\|\mathrm{div}\mathbf B_h\right\|=\sum\limits_{K_{ij}\in\mathcal K}\left(\int_{K_{ij}}\left|\nabla\cdot\mathbf B_h\right|\mathrm dx\mathrm dy+\int_{\partial K_{ij}}\left|[\mathbf B_h\cdot\mathbf n]\right|\mathrm ds\right).
$$
This quantity should be $k$th order for smooth solutions \cite{cockburn2004locally}. We will compare the performance of six variations, defined as: 
\begin{itemize}[leftmargin=*]
    \item \textbf{Base}: Standard DG scheme directly solving \eqref{eq:MHD} without Godunov's source term.
    \item \textbf{SG}: Scheme \eqref{eq:semiDG2D} without adding the ES limiter \eqref{eq:limiter}.
    \item \textbf{ES}: Scheme \eqref{eq:semiDG2D} with ES limiter \eqref{eq:limiter}.
    \item \textbf{LDF}: Standard LDF-DG scheme \cite{li2005locally} using the LDF space \eqref{eq:LDFspace} and directly solving \eqref{eq:MHD} without Godunov's source term.
    \item \textbf{SG-LDF}: Scheme \eqref{eq:semiDG2D} with LDF space \eqref{eq:LDFspace} but without adding ES limiter \eqref{eq:limiter}.
    \item \textbf{ES-LDF}: Proposed scheme, i.e. the scheme \eqref{eq:semiDG2D} with LDF space \eqref{eq:LDFspace} and ES limiter \eqref{eq:limiter}.
\end{itemize}

\begin{Ex}[Smooth MHD vortex]\label{ex:vortex}
We first consider the smooth vortex test problem, which was first introduced in \cite{balsara2004second}. This genuinely nonlinear benchmark is usually used to verify the accuracy of numerical schemes. We run this problem until $T=20$ on different mesh sizes with $N_x=N_y=:N$. Fig. \ref{figvortex1} illustrates the $L^2$ and $L^\infty$ errors for $B_x$. It is observed that all tested schemes achieve the optimal third-order accuracy for $k=2$. Furthermore, one can see that both Godunov's source term and the LDF treatment can effectively reduce the error of $B_x$ relative to the Base scheme.

We also plot the divergence norm and maximum cell entropy inequality violation in Fig. \ref{figvortex2}. From Fig. \ref{figvortex2} (a), the divergence norms of all schemes except the Base scheme converge at second order. Meanwhile, the schemes equipped with the LDF property (LDF, SG-LDF, ES-LDF) yield smaller divergence errors. From Fig. \ref{figvortex2}(b), we can see the schemes without Godunov's source term (Base, LDF) have the largest violations of the cell entropy inequality. The schemes solving \eqref{eq:MHDsym} with the source term (SG, SG-LDF) show better performance, and do not violate the cell entropy inequality for $N=25, 50$. However, for $N=100, 200$, these schemes are still not ES. Nevertheless, their entropy violations are smaller than the schemes without source term. Among all schemes, the schemes solving \eqref{eq:MHDsym} with ES limiter (ES, ES-LDF) keep the violation at the level of machine precision, confirming their strict fully-discrete entropy stability.
\end{Ex}

\begin{figure}[htb!]
	\centering
	\subfigure[$L^2$ error.]{
		\includegraphics[width=0.4\linewidth]{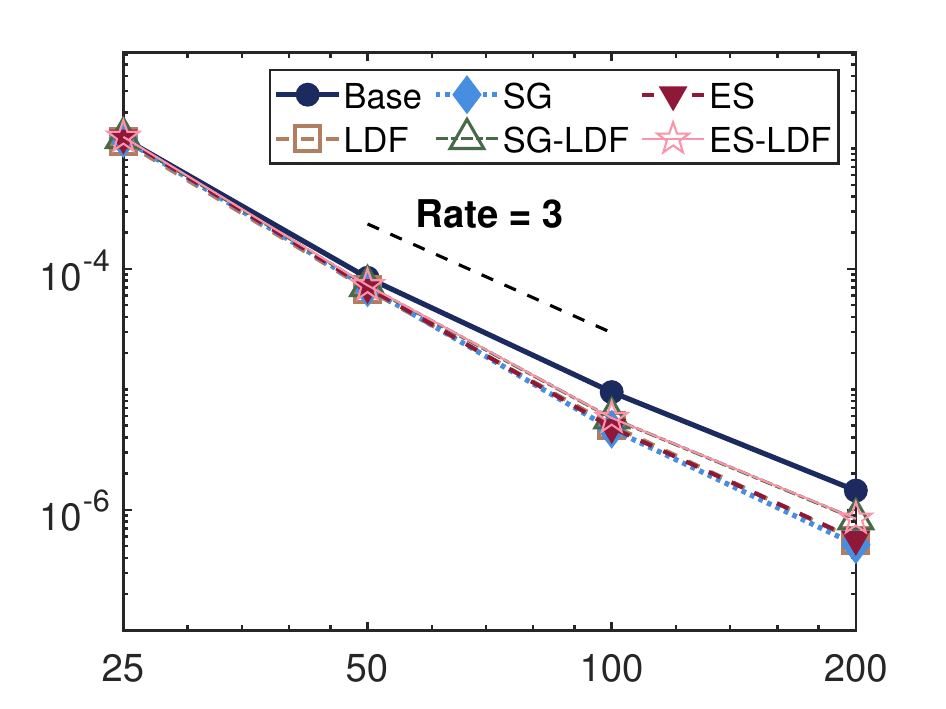}}
    \subfigure[$L^\infty$ error.]{
		\includegraphics[width=0.4\linewidth]{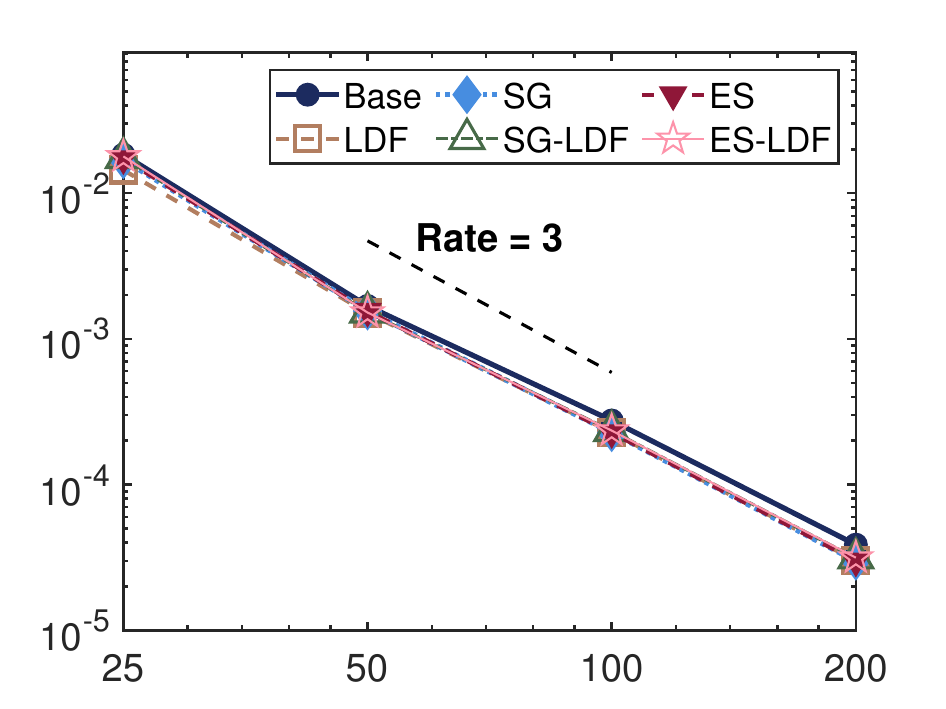}}
        \caption{Example \ref{ex:vortex}: Smooth MHD vortex. The $L^2$ and $L^\infty$ errors of $B_x$ and convergence rates of different schemes.}
        \label{figvortex1}
\end{figure}
        
\begin{figure}[htb!]
    \centering
	\subfigure[$\left\|\mathrm{div}\mathbf B_h\right\|$.]{
	   \includegraphics[width=0.4\linewidth]{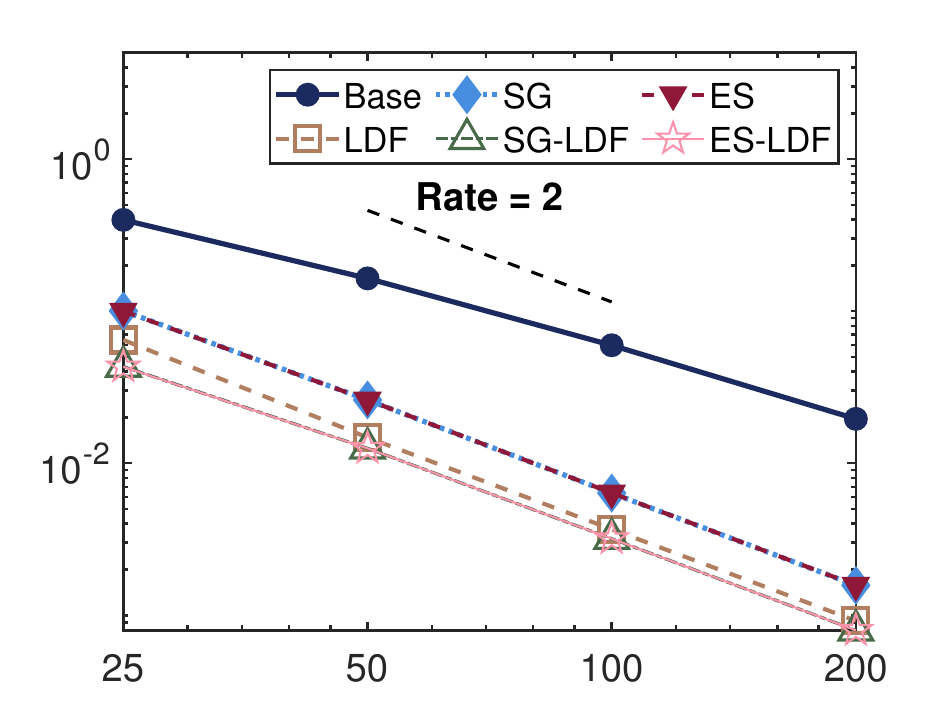}}
    \subfigure[$\max\limits_{i,j}(\tilde{\mathcal U}_{ij}-\mathcal U^{\mathrm{up}}|_{K_{ij}})$.]{
		\includegraphics[width=0.4\linewidth]{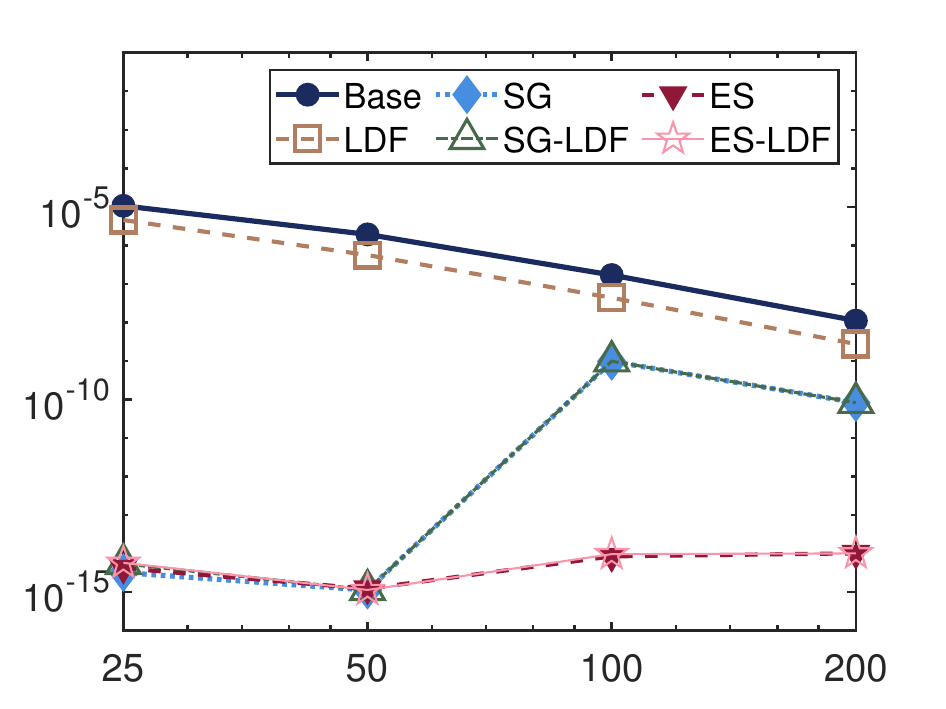}}
	\caption{Example \ref{ex:vortex}: Smooth MHD vortex. The divergence norm and maximum cell entropy inequality violation. }
    \label{figvortex2}
\end{figure}

\begin{Ex}[Orszag--Tang vortex]\label{ex:OTV} The Orszag--Tang vortex \cite{orszag1979small} is a classical test for evaluating the robustness of MHD schemes as the flow transitions from smooth initial conditions to complex states with interacting shock waves. We simulate all schemes to $T=0.5$ on $N_x\times N_y=192\times 192$ meshes, and the cell entropy violation is plotted in Fig. \ref{figOTV2}. The results of SG and SG-LDF schemes are similar, and likewise for ES and ES-LDF schemes. This verifies the advantage of the proposed scheme in strictly preserving the fully-discrete entropy stability property.

\begin{figure}[htb!]
	\centering
	\subfigure[Base.]{
		\includegraphics[width=0.4\linewidth]{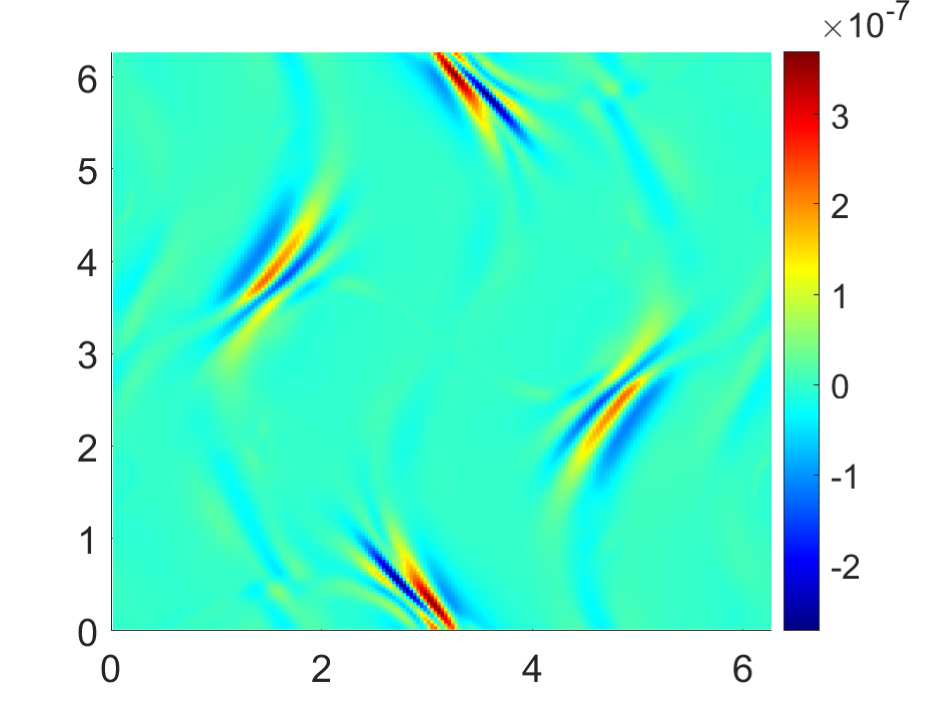}}
    \subfigure[LDF.]{
		\includegraphics[width=0.4\linewidth]{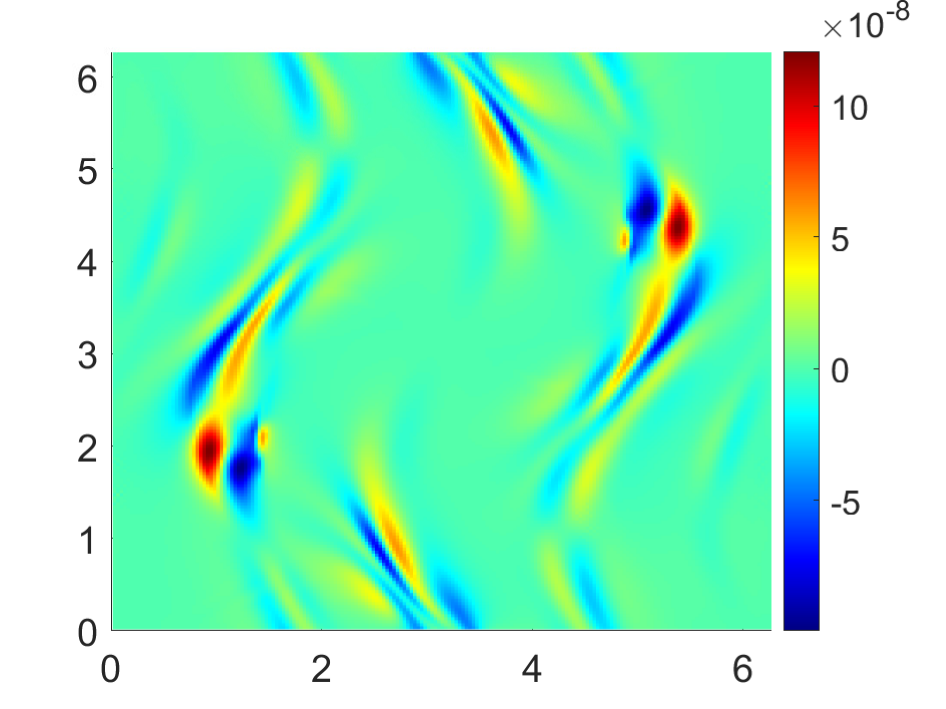}}
	\subfigure[SG-LDF.]{
	\includegraphics[width=0.4\linewidth]{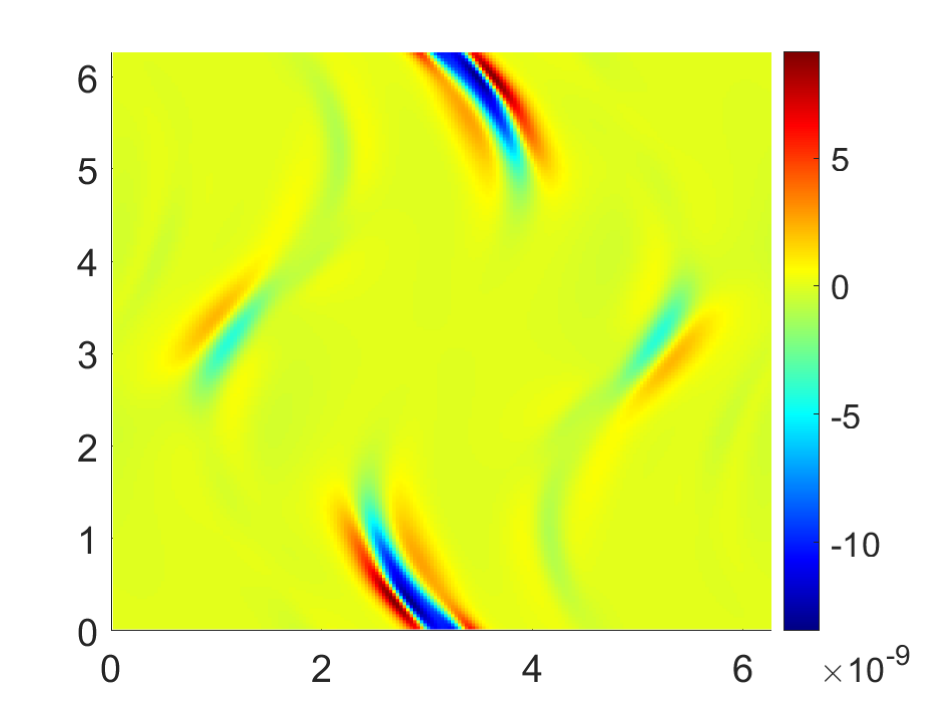}}
    \subfigure[ES-LDF.]{
		\includegraphics[width=0.4\linewidth]{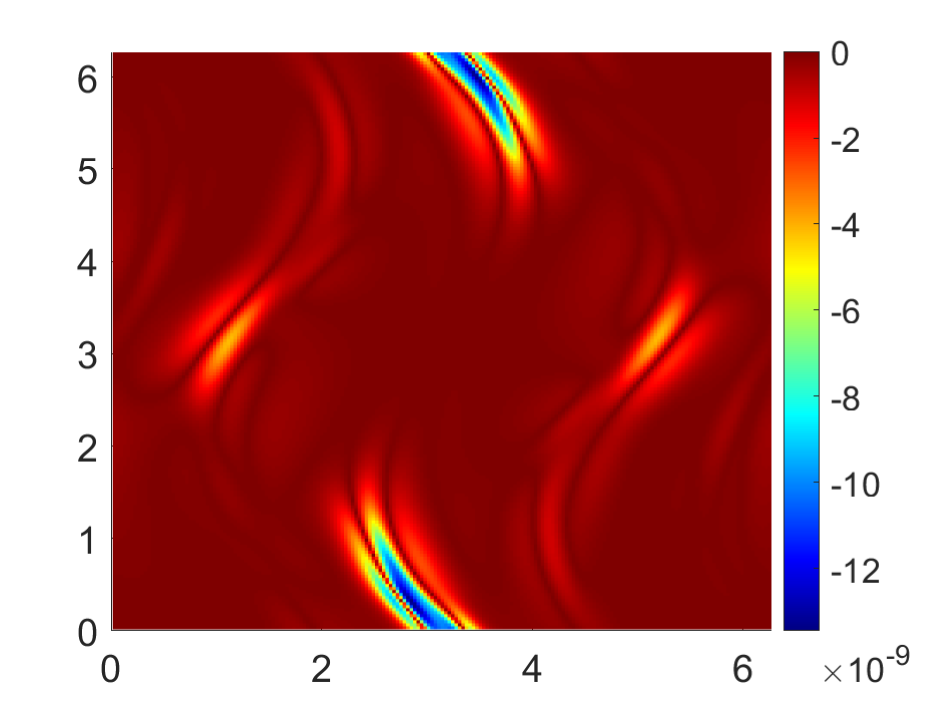}}
	\caption{Example \ref{ex:OTV}: Orszag--Tang vortex. The cell entropy violation at $T=0.5$. }
    \label{figOTV2}
\end{figure}

Next, we simulate this problem to longer times. When $T\approx 1.08$, the shock appears, and the non-ES schemes (Base, LDF, SG, SG-LDF) all blow up near this time, while the ES and ES-LDF schemes remain stable. The results of these two schemes are similar. In Fig. \ref{figOTV}, we only show the density results for the ES-LDF scheme at $T=3,4$. These are in good agreement with the results in \cite{li2005locally, liu2025globally, liu2025structureMHD}, and due to its low-dissipation nature, the small structures are well captured. For this example, the semi-discrete ES scheme \cite{liu2018entropy} will also blow up without a shock limiter, highlighting the advantage of maintaining fully-discrete entropy stability.

\begin{figure}[htb!]
	\centering
        
	\subfigure[$T=3$.]{
	\includegraphics[width=0.4\linewidth]{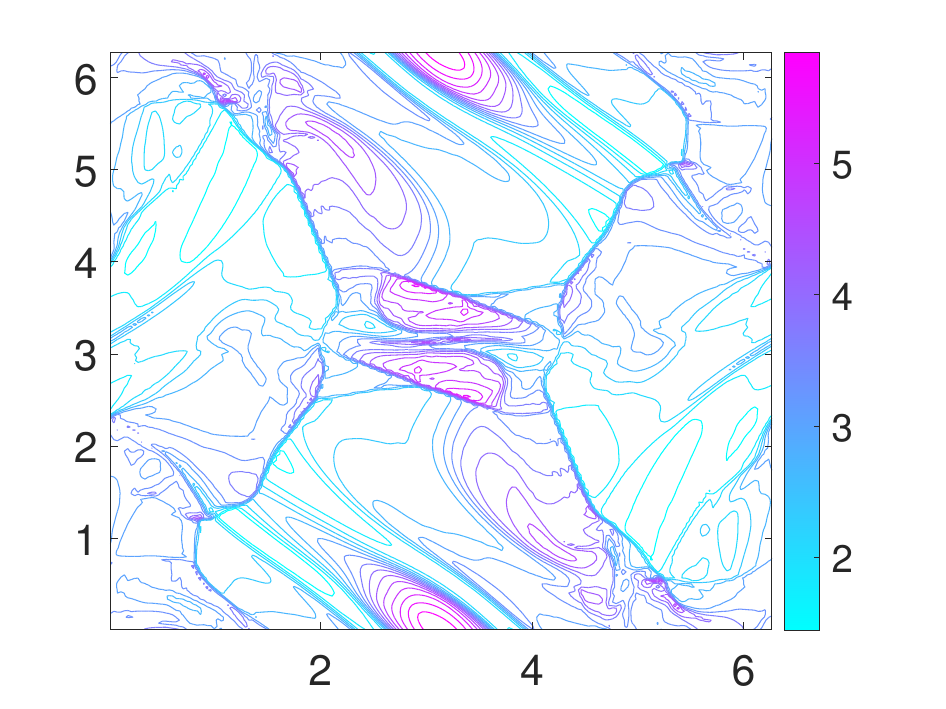}}
    \subfigure[$T=4$.]{
		\includegraphics[width=0.4\linewidth]{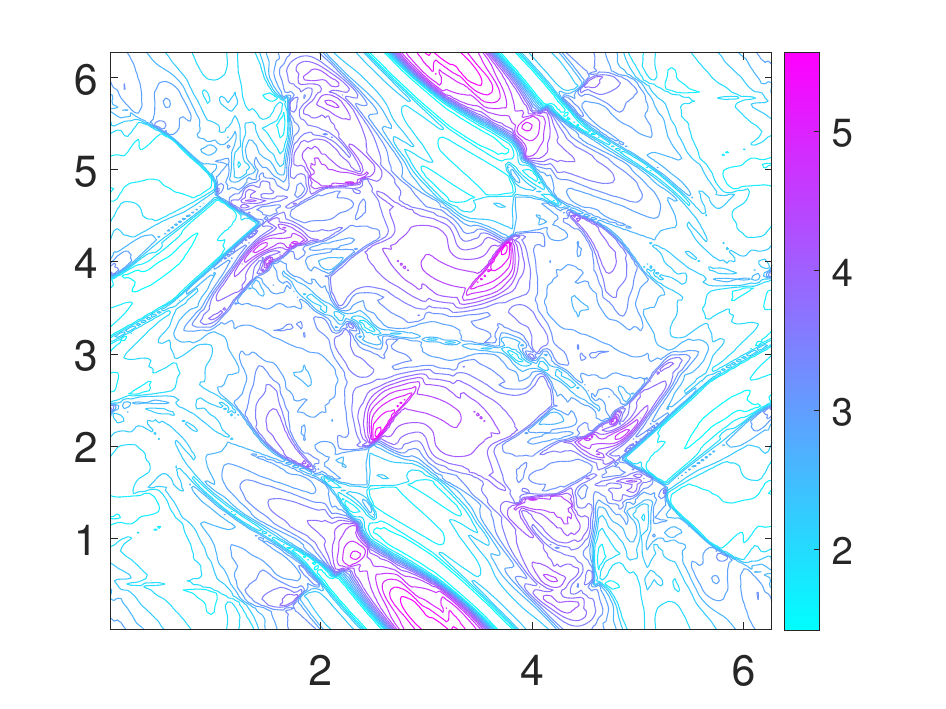}}
	\caption{Example \ref{ex:OTV}: Orszag--Tang vortex. $N_x\times N_y=192\times 192$ meshes. 
    }
    \label{figOTV}
\end{figure}

\end{Ex}

\begin{Ex}[Kelvin--Helmholtz instability]\label{ex:KH} The Kelvin--Helmholtz instability is a fundamental fluid instability. This test assesses the scheme's ability to resolve complex, small-scale turbulent structures driven by shear flows. Here we follow the setup in \cite{mignone2010high}. For $t \le 5$, the perturbation undergoes linear growth, winding up magnetic field lines as an anomalous cat's eye vortex forms. Conversely, for $t \ge 8$, tearing mode instabilities suppress field amplification, initiating magnetic reconnection that drives flux out of the vortex.

We use $N_x\times N_y=256\times 512$ meshes to simulate this problem. In Fig. \ref{figKH}, we display the results of $B_p/B_t$ at $T=20$, where $B_p=\sqrt{B_x^2+B_y^2}$ and $B_t=B_z$. The Base scheme will blow up near $T=4.8$, and the other non-ES schemes (LDF, SG, SG-LDF) will blow up near $T=7.8$. Thanks to the fully-discrete entropy stability, the ES and ES-LDF schemes remain stable until $T=20$. Meanwhile, the ES-LDF scheme exhibits significantly less numerical dissipation, successfully capturing the fine-scale turbulent structures and the sharp rollup of the cat's eye vortices compared to the heavily smeared results of the ES scheme. Furthermore, the temporal evolution of the poloidal magnetic energy 
$<B_p^2>={\int_{\Omega}B_p^2(t)\mathrm dx\mathrm dy}/{\int_{\Omega} B_p^2(0)\mathrm dx\mathrm dy}$ 
in Fig. \ref{figKH2} (a) quantitatively demonstrates that the ES-LDF scheme predicts a higher peak energy, accurately reflecting the physical magnetic field amplification before the onset of tearing mode instabilities around $t\approx 8$. Notably, as observed in Fig. 6(a), the ES scheme exhibits an unnatural flattening in the growth rate of $<B_p^2>$ around $t\approx 5$. This non-physical artifact, which is absent in standard reference solutions in the literature \cite{mignone2010high, rueda2023entropy, liu2025globally}, suggests that the accumulation of divergence errors in the ES scheme introduces spurious magnetic forces. By effectively enforcing the LDF property, the ES-LDF scheme successfully eliminates this numerical artifact, recovering the smooth and uninterrupted magnetic field amplification characteristic of the instability. Meanwhile, the nearly identical evolution of $\Delta u_y=(u_{y,\mathrm{max}}-u_{y,\mathrm{min}})/2$ during the early linear growth phase in Fig. \ref{figKH2} (b) confirms that both schemes maintain high physical fidelity before the highly nonlinear turbulent mixing dominates.

\begin{figure}[htb!]
	\centering
	\subfigure[ES.]{
	\includegraphics[width=0.4\linewidth]{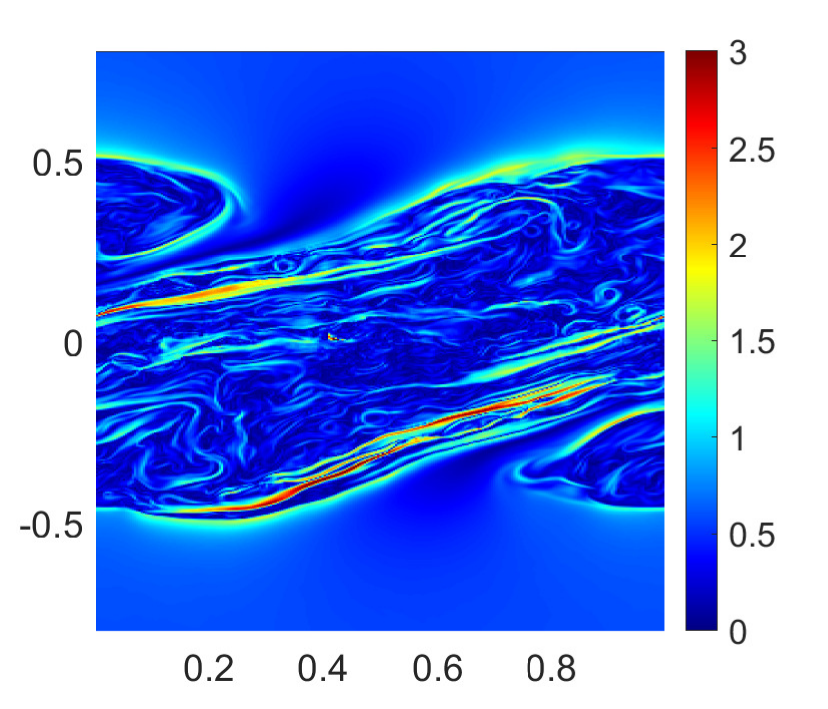}}
    \subfigure[ES-LDF.]{
	\includegraphics[width=0.4\linewidth]{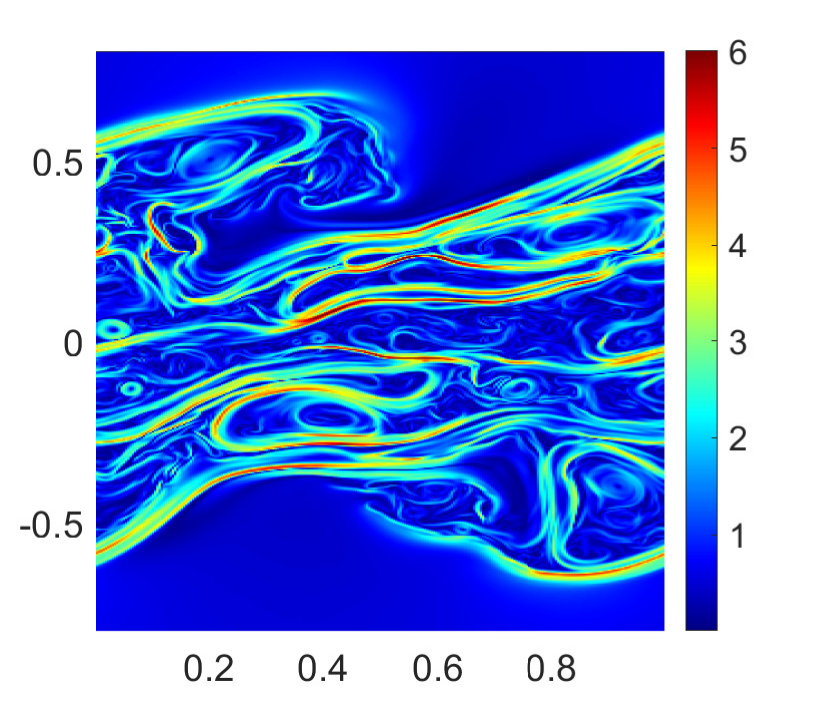}}
	\caption{Example \ref{ex:KH}: Kelvin--Helmholtz instability. $B_p/B_t$ at $T=20$. $N_x\times N_y=256\times 512$. }
    \label{figKH}
\end{figure}

\vspace{-0.5em}
\begin{figure}[htb!]
	\centering
    \subfigure[$<B_p^2>$]{
	\includegraphics[width=0.4\linewidth]{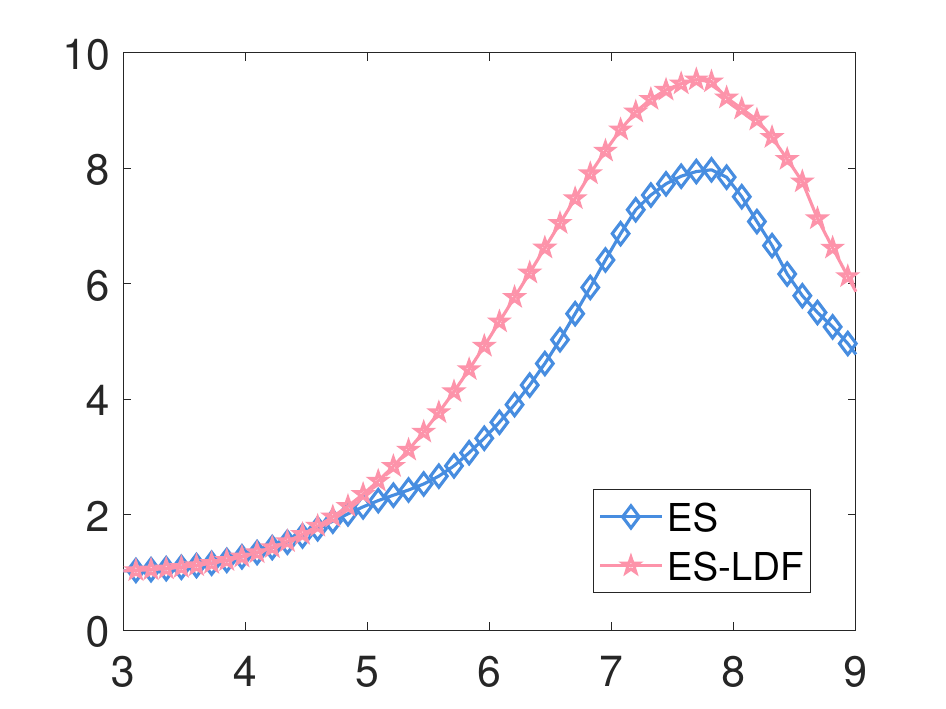}}
	\subfigure[$\Delta u_y$]{
	\includegraphics[width=0.4\linewidth]{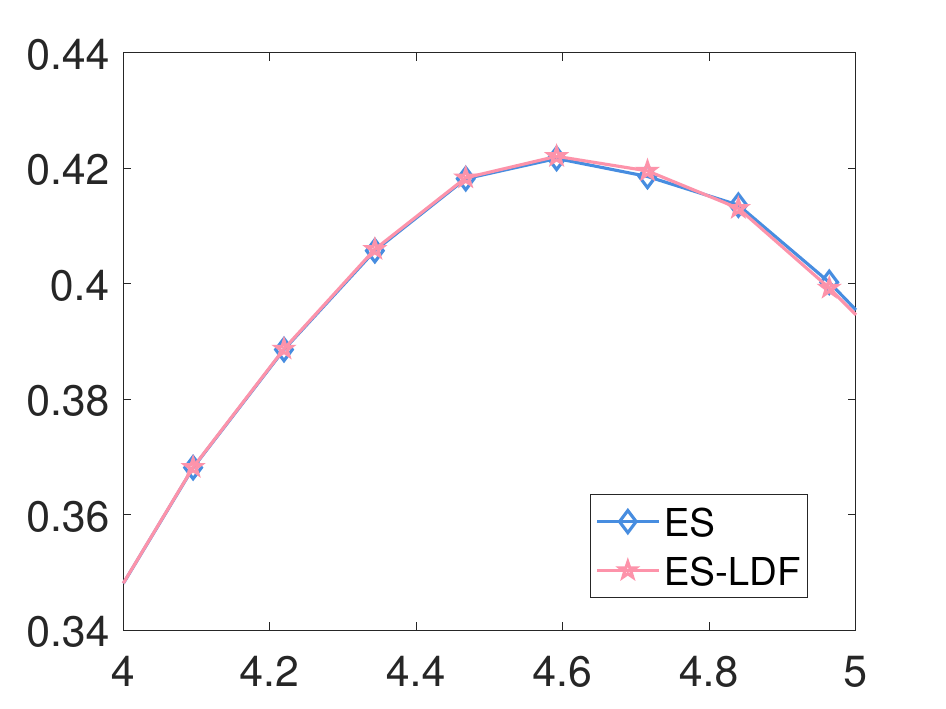}}
	\caption{Example \ref{ex:KH}: Kelvin--Helmholtz instability. The evolution of $<B_p^2>$ and $\Delta u_y$ with time. }
    \label{figKH2}
\end{figure}

\end{Ex}

\begin{Ex}[Cloudshock interaction]\label{ex:Cloudshock} This test \cite{rossmanith2006unstaggered} simulates the dynamical process of a strong shock wave impacting a stationary, high-density bubble. After the shock passes the bubble, very complex structures appear in the computational domain. In numerical simulations, this test is primarily used to verify the solver's ability to capture strong shocks, bow shocks, and transmitted shocks under extreme deformation conditions. Moreover, those structures around the bubble are susceptible to numerical dissipation, and low-dissipation schemes are advantageous for obtaining sharper structures. We use the setup in \cite{christlieb2014finite}. For this problem, the PP limiter is used. 

We simulate this problem using $320 \times 320$ meshes until $T=0.06$. For this example, the PP limiter allows all six schemes to remain stable, and the results are similar. The density results for the ES-LDF scheme are shown in Fig. \ref{figCloudshock}. It can be seen that the complex structures are well resolved with the proposed ES-LDF scheme, demonstrating the low numerical dissipation of our scheme. The solution agrees well with the results in the literature \cite{christlieb2014finite, wu2018provably, liu2025structureMHD, liu2025globally}. 

\begin{figure}[htb!]
	\centering
    \subfigure[$\ln\rho$.]{
		\includegraphics[width=0.31\linewidth]{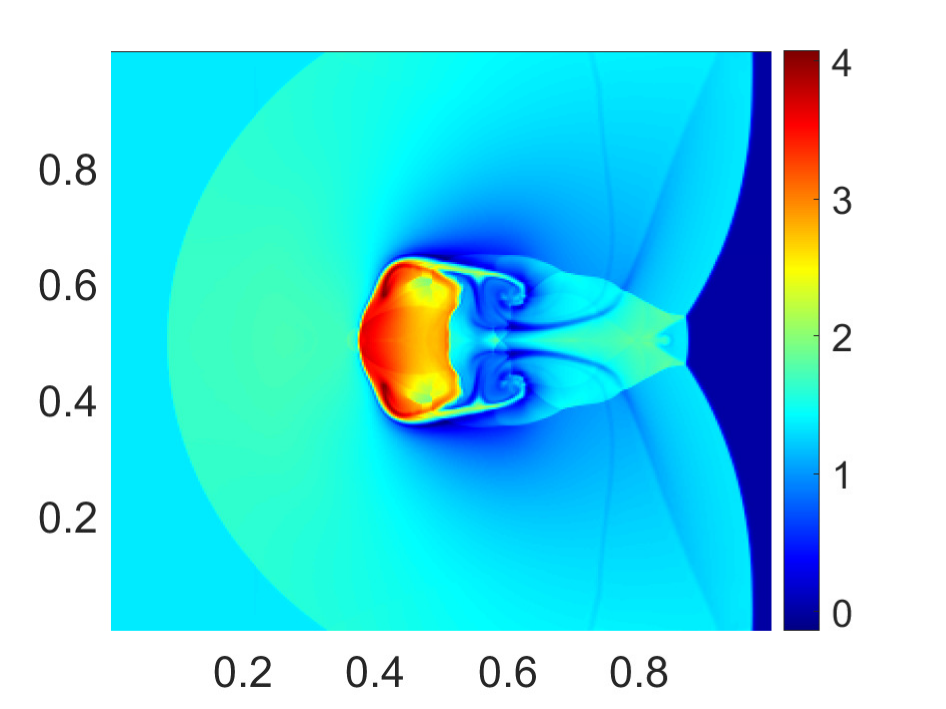}}    
	\subfigure[$p$.]{
	\includegraphics[width=0.31\linewidth]{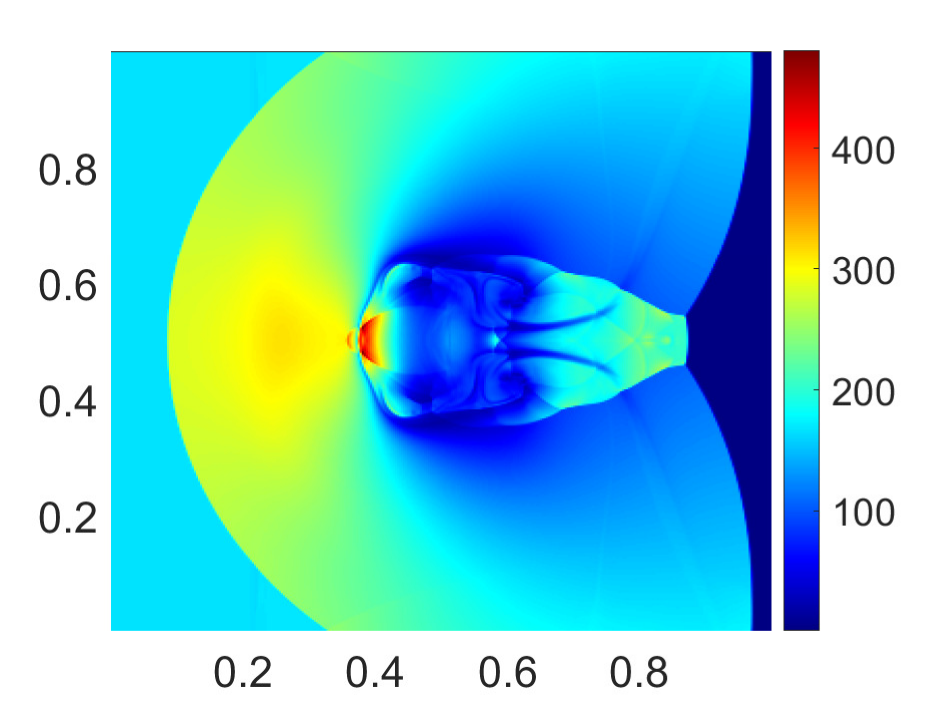}}
    \subfigure[$\left\|\mathbf B\right\|$.]{
		\includegraphics[width=0.31\linewidth]{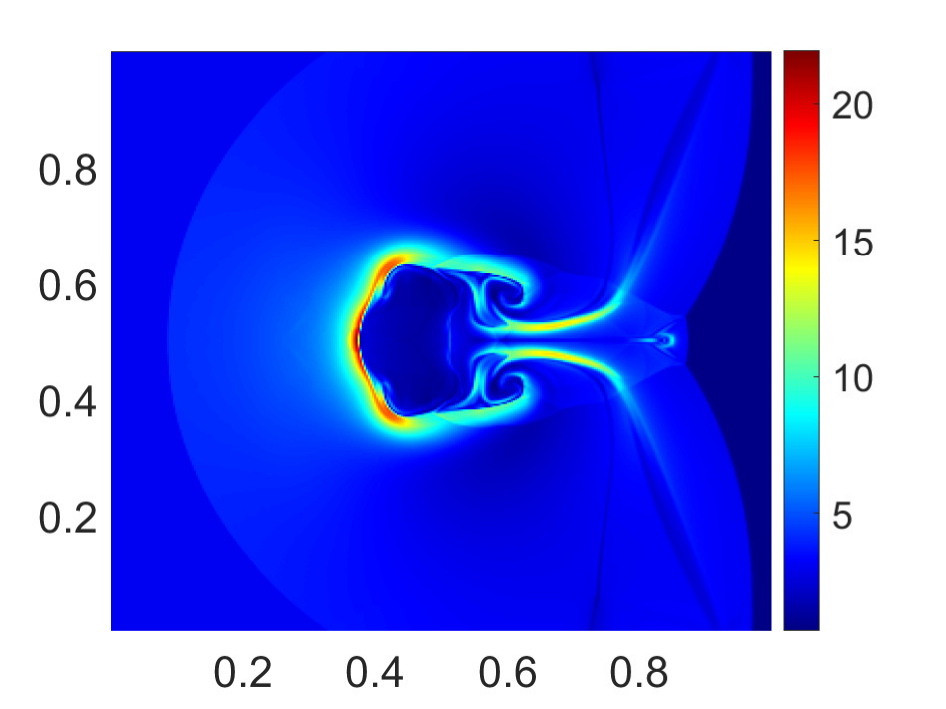}}
	\caption{Example \ref{ex:Cloudshock}: Cloudshock interaction. $N_x\times N_y=320\times 320$, $T=0.06$.  }
    \label{figCloudshock} 
\end{figure}

\end{Ex}

\begin{Ex}[Extreme MHD jet]\label{ex:jet} This test simulates astrophysical jet phenomena under extreme conditions, which was first introduced by Balsara \cite{balsara2012self} and later modified by Wu \textit{et al.} in \cite{wu2018provably, liu2025structureMHD, yan2025provably} to create more challenging configurations, serving as an ultimate stress test to evaluate the robustness of the solver. Here we test the most extreme case in \cite{yan2025provably} with $B_a=\sqrt{20000}$ and $u_{\mathrm{jet}}=1000000$. For this example, the PP limiter is employed. We particularly emphasize that $\gamma=1.4$ for this example. 

We only simulate the right half-domain $x>0$ and employ a reflective boundary condition at $x=0$. The part $x<0$ is obtained by symmetry. The test is run on $200 \times 600$ meshes up to $T=0.0000015$. We note here that the non-ES schemes (Base, LDF, SG, SG-LDF) will all blow up for this challenging example even if the PP limiter is employed.  In Fig. \ref{figjet}, we show the results of the ES-LDF scheme, and those of the ES scheme are similar. With the ES and PP limiters, the computation remains completely stable, accurately tracking the jet propagation and the extreme bow shock front, demonstrating a significant advantage of maintaining fully-discrete entropy stability. 

\begin{figure}[htb!]
	\centering
	\subfigure[$\ln\rho$.]{
		\includegraphics[width=0.31\linewidth]{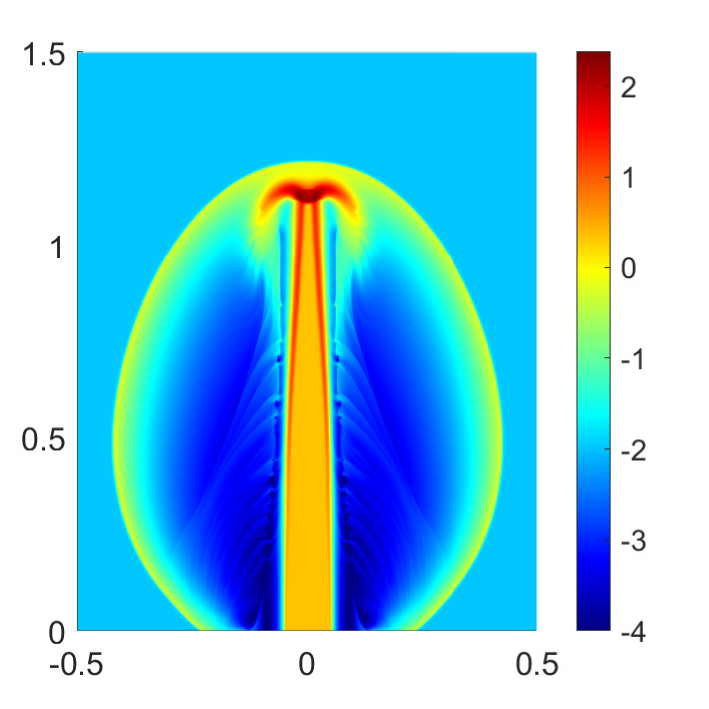}}
	\subfigure[$\left\|\mathbf u\right\|$.]{
	\includegraphics[width=0.31\linewidth]{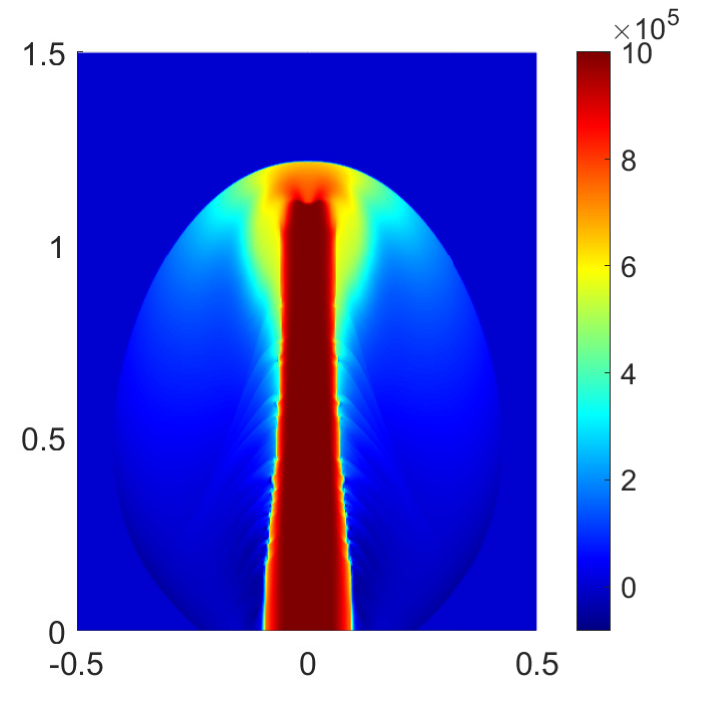}}
    \subfigure[$\ln (\left\|\mathbf B\right\|^2)$.]{
	\includegraphics[width=0.31\linewidth]{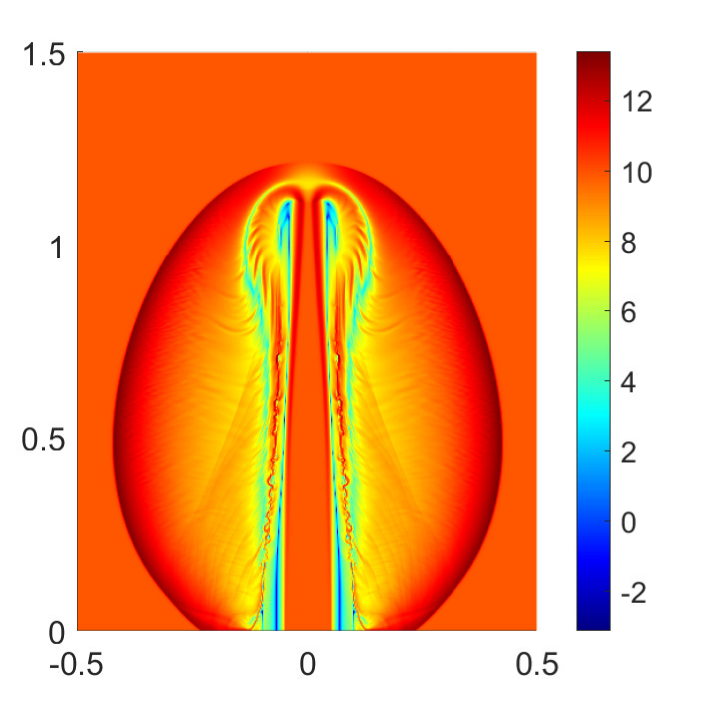}}
	\caption{Example \ref{ex:jet}: Extreme MHD jet. $N_x\times  N_y=200\times 600$, $T=0.0000015$.  }
    \label{figjet}
\end{figure}

\end{Ex}

\section{Concluding remarks} \label{sec7}

In this paper, we develop and analyze a class of high-order fully-discrete ES explicit DG schemes for ideal MHD equations that are also compatible with the LDF space. Based on Godunov's symmetrizable form, our approach addresses the highly non-trivial challenge of strictly preserving fully-discrete entropy stability for non-conservative MHD systems while maintaining high-order spatial and temporal accuracy. 

The main theoretical contribution lies in the proposal of a novel generalized-path-decomposition framework. By innovatively interpreting the interior volume integral of the non-conservative source term as an exact integral along a polynomial path constructed by the DG solution itself, we successfully establish the weak entropy inequality for cell average update schemes. Coupled with the ES limiter, the proposed explicit scheme strictly enforces the genuine cell entropy inequality and global entropy stability, which are subsequently used to obtain a Lax--Wendroff-type theorem guaranteeing that the solution limit satisfies the entropy condition. Furthermore, we demonstrate the broad applicability of this framework by showing that several existing classical solvers, which can be viewed as methods with low-order approximations of path integrals, share similar theoretical properties. Finally, we combine the proposed scheme with LDF methods to enforce zero divergence within each cell. Extensive numerical tests show that the proposed method not only enjoys low numerical dissipation for resolving complex structures but also exhibits strong robustness. 

Our future work will focus on extending the current ES framework to globally divergence-free (GDF) methods, aiming to construct fully-discrete ES, GDF, and strictly conservative DG schemes for ideal MHD equations.

\section*{Use of AI tools} The authors used ChatGPT (OpenAI, GPT-5.6 Sol) to assist with language editing, checking portions of the mathematical arguments, and revising their presentation. All AI-assisted content was independently verified by the authors, who assume responsibility for the final manuscript.

\bibliographystyle{siamplain}
\bibliography{references}

\end{document}